\newtheorem{xxx}{XXX}[section]
\newtheorem{corollary}[xxx]{Corollary}
\newtheorem{proposition}[xxx]{Proposition}
\newtheorem{theorem}[xxx]{Theorem}
\newtheorem{lemma}[xxx]{Lemma}
\theoremstyle{definition}
\newtheorem{definition}[xxx]{Definition}
\newtheorem{example}[xxx]{Example}
\newtheorem{question}[xxx]{Question}
\newcommand{\REMARK}[1]{}
\newcommand{\compat}{\leftrightarrow}
\newcommand{\Nat}{{\mathbb N}}
\newcommand{\DIMEN}{\mathop{\mathrm{dim}}}
\begin{document}

\title{Blocks of homogeneous effect algebras}
\author{Gejza Jen\v ca}

\begin{abstract} 
Effect algebras, introduced by Foulis and Bennett in
1994, are partial algebras which generalize some well known
classes of algebraic structures (for example orthomodular lattices, MV algebras,
orthoalgebras etc.). In the present paper, we introduce a new class of effect
algebras, called {\em homogeneous effect algebras}. This class includes
orthoalgebras, lattice ordered effect algebras and effect algebras satisfying
Riesz decomposition property. We prove that every homogeneous effect algebra is
a union of its blocks, which we define as maximal sub-effect algebras
satisfying Riesz decomposition property. This generalizes  a recent result by
Rie\v canov\'a, in which lattice ordered effect algebras
were considered. Moreover, the notion of a block of a homogeneous effect algebra
is a generalization of the notion of a block of an orthoalgebra. We prove that
the set of all sharp elements in a homogeneous effect algebra $E$ forms an
orthoalgebra $E_S$. Every block of $E_S$ is the center of a block of $E$.
The set of all sharp elements in the compatibility center of $E$ coincides 
with the center of $E$. 
Finally, we present some examples of homogeneous effect algebras
and we prove that for a Hilbert space $\mathbb H$ with $\DIMEN(\mathbb H)>1$,
the standard effect algebra
$\mathcal E(\mathbb H)$ of all effects in $\mathbb H$ is not homogeneous. 
\end{abstract} 

\address{Department of Mathematics, Faculty of Electrical Engineering and
Information Technology, Ilkovi\v cova~3, 812~19~Bratislava, Slovakia
}
\email{jenca@kmat.elf.stuba.sk}
\thanks{This research is supported by grant G-1/7625/20 of M\v S SR,
Slovakia}
\keywords{effect algebra, orthoalgebra, block, Riesz decomposition property}
\subjclass{Primary 06C15; Secondary 03G12}

\maketitle

\section{Introduction}

Effect algebras (or D-posets) have recently been introduced by Foulis and
Bennett in \cite{FouBen:EAaUQL} for study of foundations of quantum mechanics.
(See also \cite{KopCho:DP}, \cite{GiuGre:TaFLfUP}.)
The prototype effect
algebra is $(\mathcal E(\mathbb H),\oplus,0,I)$, where $\mathbb H$ is a
Hilbert space and $\mathcal E(\mathbb H)$ consists of all self-adjoint
operators $A$ of $\mathbb H$ such that $0\leq A\leq I$. For 
$A,B\in\mathcal E(\mathbb H)$, $A\oplus B$ is defined iff $A+B\leq 1$ and then
$A\oplus B=A+B$. $\mathcal E(\mathbb H)$ plays an important role in the
foundations of quantum mechanics \cite{Lud:FoQM}, \cite{BusGraLah:OQP}.

The class of effect algebras includes orthoalgebras \cite{FouGreRut:FaSiO} and
a subclass (called MV-effect algebras or Boolean D-posets or Boolean effect
algebras), which is essentially equivalent to MV-algebras, introduced by Chang
in \cite{Cha:AAoMVL} (cf. e.g. \cite{ChoKop:BDP}, \cite{BenFou:PSEA} for
results on MV-algebras in the context of effect algebras). The class of
orthoalgebras includes other classes of well-known sharp structures, like
orthomodular posets \cite{PtaPul:OSaQL} and orthomodular lattices
\cite{Kal:OL},\cite{Ber:OLaAA}.

One of the most important results in the theory of effect algebras was proved
by Rie\v canov\'a in her paper \cite{Rie:AGoBfLEA}. She proved that every
lattice ordered effect algebra is a union of maximal mutually compatible
sub-effect algebras, called blocks. This result generalizes the well-known fact
that an orthomodular lattice is a union of its maximal Boolean subalgebras.
Moreover, as proved in \cite{JenRie:OSEiLOEA}, in every lattice ordered effect
algebra $E$ the set of all sharp elements forms a sub-effect algebra $E_S$,
which is a sub-lattice of $E$; $E_S$ is then an orthomodular lattice, and every
block of $E_S$ is the center of some block of $E$. On the other hand, every
orthoalgebra is a union of maximal Boolean sub-orthoalgebras. Thus, although
the classes of lattice ordered effect algebras and orthoalgebras are
independent, both lattice ordered effect algebras and orthoalegebras are
covered by their blocks. This observation leads us to a natural question:

\begin{question}
Is there a class of effect algebras, say $\mathbb X$, with the following
properties?
\begin{itemize}
\item $\mathbb X$ includes orthoalgebras and lattice ordered effect algebras.
\item Every $E\in\mathbb X$ is a union of (some sort of) blocks.
\end{itemize}
\end{question}

In the present paper, we answer this question in the affirmative. We introduce
a new class of effect algebras, called homogeneous effect algebras. This class
includes lattice ordered effect algebras, orthoalgebras and effect algebras
satisfying Riesz decomposition property (cf. e.g. \cite{Rav:OaSToEA}). The
blocks in homogeneous algebras are maximal sub-effect algebras satisfying Riesz
decomposition property. We prove that the set of all sharp elements $E_S$ in a
homogeneous effect algebra $E$ forms a sub-effect algebra (of course, $E_S$ is an
orthoalgebra) and every block of $E_S$ is the center of a block of $E$. In the
last section we present some examples of homogeneous effect algebras and we
prove that $\mathcal E(\mathbb H)$ is not homogeneous unless $\DIMEN(\mathbb
H)\leq 1$.

\section{Definitions and basic relationships}

An {\em effect algebra} is a partial algebra $(E;\oplus,0,1)$ with a binary 
partial operation $\oplus$ and two nullary operations $0,1$ satisfying
the following conditions.
\begin{enumerate}
\item[(E1)]If $a\oplus b$ is defined, then $b\oplus a$ is defined and
		$a\oplus b=b\oplus a$.
\item[(E2)]If $a\oplus b$ and $(a\oplus b)\oplus c$ are defined, then
		$b\oplus c$ and $a\oplus(b\oplus c)$ are defined and
		$(a\oplus b)\oplus c=a\oplus(b\oplus c)$.
\item[(E3)]For every $a\in E$ there is a unique $a'\in E$ such that
		$a\oplus a'=1$.
\item[(E4)]If $a\oplus 1$ exists, then $a=0$
\end{enumerate}

Effect algebras were introduced by Foulis and Bennett in their paper 
\cite{FouBen:EAaUQL}. Independently, K\^ opka and Chovanec introduced
an essentially equivalent structure called {\em D-poset} (see \cite{KopCho:DP}).
Another equivalent structure, called {\em weak orthoalgebras} 
was introduced by Giuntini and Greuling in \cite{GiuGre:TaFLfUP}. 

For brevity, we denote the effect algebra $(E,\oplus,0,1)$ by $E$.
In an effect algebra $E$, we write $a\leq b$ iff there is $c\in E$ such
that $a\oplus c=b$.
It is easy to check that every effect algebra is cancellative, thus
$\leq$ is a partial order on $E$. In this partial order,
$0$ is the least and $1$ is the greatest element of $E$.
Moreover, it is possible to introduce
a new partial operation $\ominus$; $b\ominus a$ is defined iff
$a\leq b$ and then $a\oplus(b\ominus a)=b$.
It can be proved that $a\oplus b$ is defined iff $a\leq b'$ iff
$b\leq a'$. Therefore, it is usual to denote the domain of $\oplus$ by $\perp$.
If $a\perp b$, we say that $a$ and $b$ are {\em orthogonal}.
Let $E_0\subseteq E$ be such
that $1\in E_0$ and, for all $a,b\in E_0$ with $a\geq b$,
$a\ominus b\in E_0$. Since $a'=1\ominus a$ and $a\oplus b=(a'\ominus b)'$,
$E_0$ is closed with respect to $\oplus$ and $~'$.
We then say that $(E_0,\oplus,0,1)$ is a {\em sub-effect algebra of $E$}.
Another possibility to construct a substructure of an
effect algebra $E$ is to restrict $\oplus$ to an interval $[0,a]$,
where $a\in E$, letting $a$ act as the unit element. We denote such effect
algebra by $[0,a]_E$.

\noindent{\bf Remark. }For our purposes, it is natural to consider orthomodular
lattices, orthomodular posets, MV-algebras, and Boolean algebras as special
types of effect algebras. In the present paper, we will write shortly
``orthomodular lattice'' instead of ``effect algebra associated with an
orthomodular lattice'' and similarly for orthomodular posets, MV-algebras, and
Boolean algebras.

An effect algebra satisfying $a\perp a\implies a=0$ is called an {\em
orthoalgebra} (cf. \cite{FouGreRut:FaSiO}). An effect algebra $E$ is an {\em
orthomodular poset} iff, for all $a,b,c\in E$, $a\perp b\perp c\perp a$ implies
that $a\oplus b\oplus c$ exists (cf. \cite{FouBen:EAaUQL}). An orthoalgebra is
an {\em orthomodular lattice} iff it is lattice ordered.

Let $E$ be an effect algebra.
Let $C=(c_1,\ldots,c_n)$ be a finite family of elements of 
$E$. We say that $C$ is {\em orthogonal} iff the sum $c_1\oplus\ldots\oplus c_n$
exists. We then write $\bigoplus C=c_1\oplus\ldots\oplus c_n$. For $n=0$,
we put $\bigoplus C=0$. We say that $Ran(C)=\{c_1,\ldots,c_n\}$ is 
{\em the range of $C$}. 
Let $C=(c_1,\ldots,c_n),D=(d_1,\ldots,d_k)$ be orthogonal families of elements.
We say that $D$ is a {\em refinement of $C$} iff there is a partition
$P=\{P_1,\ldots,P_n\}$ of $\{1,\ldots,k\}$ such that, for all
$1\leq i\leq n$, $c_i=\bigoplus_{j\in P_i}d_j$. Note that if $D$ is a refinement of
$C$, then $\bigoplus C=\bigoplus D$.

A finite subset $M_F$ of an effect algebra $E$ is called
{\em compatible with cover in $X\subseteq E$} iff there is a finite
orthogonal family $C=(c_1,\ldots,c_n)$ with $Ran(C)\subseteq X$ such
that for every $a\in M_F$ there is a set $A\subseteq\{1,\ldots,n\}$ with
$a=\bigoplus_{i\in A}c_i$. $C$ is then called an {\em orthogonal
cover} of $M_F$.
A subset $M$ of $E$ is called {\em compatible with
covers in $X\subseteq E$} iff every finite subset of $M$ is compatible
with covers in $X$. A subset $M$ of $E$ is called {\em internally
compatible} iff $M$ is compatible with covers in $M$. A subset
$M$ of $E$ is called {\em compatible} iff $M$ is compatible with covers
in $E$. An effect algebra $E$ is said to be {\em compatible} if
$E$ is a compatible subset of $E$.
If $\{a,b\}$ is a compatible set, we write $a\compat b$.
It is easy to check that $a\compat b$ iff there are $a_1,b_1,c\in E$
such that $a_1\oplus c=a$, $b_1\oplus c=b$, and $a_1\oplus b_1\oplus c$ exists.
A subset $M$ of $E$ is called {\em mutually compatible} iff, for
all $a,b\in M$, $a\compat b$. Obviously, every compatible subset of an
effect algebra is mutually compatible. In the class of lattice ordered effect
algebras, the converse also holds. It is well known that in an 
orthomodular poset, a mutually compatible set need not to be compatible
(cf. e.g. \cite{PtaPul:OSaQL}).

A lattice ordered effect algebra $E$ is called an {\em MV-algebra} iff $E$ is
compatible (cf. \cite{ChoKop:BDP}). An MV-algebra which is an orthoalgebra is
a {\em Boolean algebra}. Recently, Z. Rie\v canov\'a proved in her paper
\cite{Rie:AGoBfLEA} that every lattice ordered effect algebra is a union of
MV-algebras, which are maximal mutually compatible subsets. These are called
{\em blocks}. She proved that every block of a lattice ordered effect algebra
$E$ is a sub-effect algebra and a sublattice of $E$. Note that Rie\v canov\'a's
results imply that every mutually compatible subset
of a lattice ordered effect algebra is compatible. Indeed, let $M$ be a
mutually compatible set. Then $M$ can be embedded into a block $B$, which is an
MV-algebra and hence compatible. Since $B$ is compatible and $M\subseteq B$,
$M$ is compatible.

On the other hand, it is easy to prove that every element of an
orthoalgebra can be embedded into a maximal sub-orthoalgebra, which is a Boolean
algebra. 

We say that an effect algebra $E$ satisfies {\em Riesz decomposition property}
iff, for all $u,v_1,\ldots,v_n\in E$ such that $v_1\oplus\ldots\oplus v_n$
exists and $u\leq v_1\oplus\ldots\oplus v_n$, there are $u_1,\ldots,u_n\in E$
such that,for all $1\leq i\leq n$, $u_i\leq v_i$ and
$u=u_1\oplus\ldots\oplus u_n$. It is easy to check that an effect algebra $E$
satisfies Riesz decomposition property iff $E$ satisfies Riesz decomposition
property with fixed $n=2$.
A lattice ordered effect algebra $E$ satisfies Riesz
decomposition property iff $E$ is an MV-algebra. An orthoalgebra $E$ satisfies
Riesz decomposition property iff $E$ is a Boolean algebra. 

Let $E_1,E_2$ be effect algebras. A map $\phi:E_1\mapsto E_2$ is called a {\em
morphism} iff $\phi(1)=1$ and $a\perp b$ implies that $\phi(a)\perp\phi(b)$ and
then $\phi(a\oplus b)=\phi(a)\oplus\phi(b)$. A morphism $\phi$ is an {\em
isomorphism} iff $\phi$ is bijective and $\phi^{-1}$ is a morphism.

\begin{definition}
\label{homogeneous}
An effect algebra $E$ is called {\em homogeneous} iff, for all $u,v_1,v_2\in E$
such that $v_1\perp v_2$, $u\leq v_1\oplus v_2$, $u\leq (v_1\oplus v_2)'$,
there are $u_1,u_2$ such that $u_1\leq v_1$, $u_2\leq v_2$ and
$u=u_1\oplus u_2$.
\end{definition}

\begin{proposition}~
\label{agoodclass}
\begin{enumerate}
\item[(a)]Every orthoalgebra is homogeneous.
\item[(b)]Every effect algebra satisfying Riesz decomposition property is
	homogeneous.
\item[(c)]Every lattice ordered effect algebra is homogeneous.
\end{enumerate}
\end{proposition}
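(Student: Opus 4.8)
The plan is to treat the three parts separately, from easiest to hardest.

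Part (b) is immediate: Riesz decomposition property (with $n=2$) applied to $u\le v_1\oplus v_2$ directly yields $u_1\le v_1$, $u_2\le v_2$ with $u=u_1\oplus u_2$. The extra hypothesis $u\le(v_1\oplus v_2)'$ from the definition of homogeneity is simply discarded, so every effect algebra with Riesz decomposition property is homogeneous.

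For part (a), I would show that in an orthoalgebra the hypotheses force $u=0$, after which $u_1=u_2=0$ works. Write $w=v_1\oplus v_2$. The condition $u\le w'$ says precisely $u\perp w$, so $u\oplus w$ exists; the condition $u\le w$ gives $w=u\oplus t$ with $t=w\ominus u$. Substituting, $u\oplus(u\oplus t)$ is defined, and by associativity (a standard consequence of (E1),(E2)) $u\oplus u$ is then defined as well. In an orthoalgebra $u\oplus u$ defined forces $u=0$, completing this case.

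Part (c) is the substantive one. Given a lattice ordered $E$ with $v_1\perp v_2$, $u\le w:=v_1\oplus v_2$ and $u\le w'$, I would set $u_1=u\wedge v_1$ (which exists since $E$ is a lattice) and $u_2=u\ominus u_1$, so that $u_1\le v_1$ and $u=u_1\oplus u_2$ hold automatically; the whole problem reduces to proving $u_2\le v_2$. Here the orthogonality hypothesis enters decisively: from $u\le w'$ we get $u\perp w$, hence $u\perp v_1$. I would then invoke the identity of lattice ordered effect algebras $u\oplus v_1=(u\vee v_1)\oplus(u\wedge v_1)$, valid exactly because $u\perp v_1$. Since $u=(u\wedge v_1)\oplus u_2$, substituting and cancelling $u\wedge v_1$ yields $u\vee v_1=v_1\oplus u_2$. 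Finally $u\le w$ and $v_1\le w$ give $u\vee v_1\le w=v_1\oplus v_2$, so $v_1\oplus u_2\le v_1\oplus v_2$, and cancelling $v_1$ gives $u_2\le v_2$, as required.

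The main obstacle is isolated in part (c): one must have the meet/join decomposition $a\oplus b=(a\vee b)\oplus(a\wedge b)$ for orthogonal $a,b$, and one must use $u\perp v_1$ to apply it. This use is essential and not cosmetic: dropping the orthogonality hypothesis would amount to claiming Riesz decomposition in every lattice ordered effect algebra, which is false (e.g. in the orthomodular lattice $MO_2$ with two complementary pairs of atoms, taking $u,v_1$ to be distinct atoms shows $u\ominus(u\wedge v_1)$ need not lie below the complementary summand). I would either cite this identity from the literature on lattice ordered effect algebras or establish it quickly: since $a,b\le a\vee b\le a\oplus b$, the element $c:=(a\oplus b)\ominus(a\vee b)$ exists, and a short cancellation argument gives $c\le a$ and $c\le b$, hence $c\le a\wedge b$; the reverse inequality $a\wedge b\le c$ is where the lattice structure is genuinely used and follows from $(a\wedge b)\oplus(a\vee b)\le a\oplus b$.
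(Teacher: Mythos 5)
Your proofs of (a) and (b) are correct and essentially identical to the paper's (the paper states (a) in one line --- $u\le v_1\oplus v_2$ and $u\le(v_1\oplus v_2)'$ give $u\perp u$, hence $u=0$ --- and calls (b) obvious; your expansion of why $u\perp u$ follows is just the same argument made explicit). For (c) you take a genuinely different route. The paper notes that $v_1\perp v_2$ and $u\le(v_1\oplus v_2)'$ make $\{u,v_1,v_2\}$ mutually orthogonal, hence mutually compatible, and then invokes Rie\v canov\'a's block theorem to embed this set into a block, which is an MV-algebra and therefore satisfies Riesz decomposition; the desired $u_1,u_2$ come from decomposing inside the block. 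You instead produce explicit witnesses $u_1=u\wedge v_1$, $u_2=u\ominus u_1$ and verify $u_2\le v_2$ by a direct computation resting on the D-lattice identity $a\oplus b=(a\vee b)\oplus(a\wedge b)$ for orthogonal $a,b$; your chain $u\vee v_1=v_1\oplus u_2\le v_1\oplus v_2$ followed by cancellation is correct, and the identity you need is standard in the literature on lattice-ordered effect algebras (equivalently, $(a\vee b)\ominus a=b\ominus(a\wedge b)$ in a D-lattice), though your sketch of its proof leaves the reverse inequality $a\wedge b\le(a\oplus b)\ominus(a\vee b)$ somewhat underdeveloped, so citing it is the safer option. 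Your approach buys elementarity and an explicit decomposition --- it avoids the (nontrivial) block theorem entirely, and in fact uses only $u\perp v_1$ rather than the full hypothesis $u\le(v_1\oplus v_2)'$, so it proves slightly more; the paper's approach is shorter given machinery it cites anyway and fits the block-theoretic theme of the rest of the paper. Your $MO_2$ remark correctly isolates why the orthogonality hypothesis cannot be dropped.
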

\begin{proof}
For the proof of (a), observe that $u\leq v_1\oplus v_2$ and $u\leq (v_1\oplus
v_2)'$ imply that $u\perp u$ and thus $u=0$.
(b) is obvious. For the proof of (c), let $E$ be a lattice ordered
effect algebra. 
Note that $v_1\perp v_2$, $u\leq (v_1\oplus v_2)'$ imply that the set 
$\{u,v_1,v_2\}$ is mutually orthogonal and thus mutually compatible. Therefore,
by \cite{Rie:AGoBfLEA}, $\{u,v_1,v_2\}$ can be embedded into a block $B$. 
Since $B$ is an MV-algebra, $B$ satisfies Riesz decomposition property, 
hence $E$ is homogeneous. 
\end{proof}

\begin{proposition}
\label{homogeneousn}
Let $E$ be a homogeneous effect algebra. Let $u,v_1,\ldots,v_n\in E$
be such that $v_1\oplus\ldots\oplus v_n$ exists, 
$u\leq v_1\oplus\ldots\oplus v_n$ and
$u\leq (v_1\oplus\ldots\oplus v_n)'$. 
Then there are $v_1,\ldots,v_n$ such that, for all $1\leq i\leq n$,
$v_i\leq u_i$ and $u=u_1\oplus\ldots u_n$.
\end{proposition}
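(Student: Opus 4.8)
The plan is to proceed by induction on $n$, using the defining property of homogeneity (the case $n=2$) both as the base case and as the engine of the inductive step; throughout I read the conclusion in its intended form, namely that there are $u_1,\dots,u_n$ with $u_i\leq v_i$ and $u=u_1\oplus\dots\oplus u_n$. For $n=1$ the claim is trivial, since $u\leq v_1$ already exhibits $u_1:=u\leq v_1$ with $u=u_1$ (the degenerate case $n=0$ forces $u=0$). So I would assume the statement for $n-1$ and take $u,v_1,\dots,v_n$ satisfying the hypotheses.

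First I would re-associate the sum: writing $w=v_2\oplus\dots\oplus v_n$, the existence of $v_1\oplus\dots\oplus v_n$ gives the existence of $v_1\oplus w$, and the two hypotheses become $u\leq v_1\oplus w$ and $u\leq(v_1\oplus w)'$. Now homogeneity (Definition~\ref{homogeneous}) applies directly to $u,v_1,w$ and yields $u_1\leq v_1$ and $w_0\leq w$ with $u=u_1\oplus w_0$. The element $u_1$ is the first desired summand, and it remains to decompose $w_0$ along $v_2,\dots,v_n$ by the induction hypothesis.

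To invoke that hypothesis on $w_0$ and $v_2,\dots,v_n$ I need two things: $w_0\leq v_2\oplus\dots\oplus v_n$, which is just $w_0\leq w$, already in hand; and $w_0\leq(v_2\oplus\dots\oplus v_n)'$, i.e.\ that $w_0\oplus(v_2\oplus\dots\oplus v_n)$ exists. This orthogonality is the one genuinely load-bearing step. I would obtain it as follows: the hypothesis $u\leq(v_1\oplus\dots\oplus v_n)'$ says precisely that $u\oplus v_1\oplus v_2\oplus\dots\oplus v_n$ exists, and substituting $u=u_1\oplus w_0$ shows that $u_1\oplus w_0\oplus v_1\oplus v_2\oplus\dots\oplus v_n$ is an orthogonal family. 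Since any subfamily of an orthogonal family is again orthogonal (a standard consequence of (E1) and (E2)), the subsum $w_0\oplus v_2\oplus\dots\oplus v_n=w_0\oplus w$ exists, which is exactly $w_0\leq w'$.

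With both conditions verified, the induction hypothesis supplies $u_2,\dots,u_n$ with $u_i\leq v_i$ for $2\leq i\leq n$ and $w_0=u_2\oplus\dots\oplus u_n$. Then $u=u_1\oplus w_0=u_1\oplus u_2\oplus\dots\oplus u_n$ with $u_i\leq v_i$ for every $i$, completing the induction. I expect the only point needing care to be the extraction of the orthogonality $w_0\oplus w$ above; everything else is bookkeeping with associativity, and the reduction is driven by a single application of the $n=2$ homogeneity law per step.
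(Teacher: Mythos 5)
Your proof is correct and follows essentially the same route as the paper's: induct on $n$, peel off $v_1$ with a single application of homogeneity to get $u=u_1\oplus w_0$, verify that $w_0\leq(v_2\oplus\dots\oplus v_n)'$, and apply the induction hypothesis. The only (immaterial) difference is how that orthogonality is obtained — the paper uses the chain $w_0\leq u\leq(v_1\oplus\dots\oplus v_n)'\leq(v_2\oplus\dots\oplus v_n)'$, while you extract a subfamily of the orthogonal family $u\oplus v_1\oplus\dots\oplus v_n$; both are valid one-line arguments.
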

\begin{proof}
(By induction.) For $n=1$, it suffices to put $u_1=u$.
Assume that the proposition
holds for $n=k$. Let $u,v_1,\ldots,v_{k+1}$ be such that 
$v_1\oplus\ldots\oplus v_{k+1}$ exists,
$u\leq v_1\oplus v_2\oplus\ldots\oplus v_{k+1}$ and
$u\leq (v_1\oplus v_2\oplus\ldots\oplus v_{k+1})'$. Since $E$ is
homogeneous, there are $u_1\leq v_1$ and 
$z\leq v_2\oplus\ldots\oplus v_{k+1}$ such that $u=u_1\oplus z$.
Since
$$
z\leq u\leq (v_1\oplus\ldots\oplus v_{k+1})'
\leq (v_2\oplus\ldots\oplus v_{k+1})'
\text{,}
$$
we see that $z\leq(v_2\oplus\ldots\oplus v_{k+1})'$.
Thus, we may apply induction hypothesis. The rest is trivial.
\end{proof}

\section{Blocks of homogeneous effect algebras}

Let $E$ be an effect algebra. We say that a sub-effect algebra $B$ of $E$ is
a {\em block of $E$} iff $B$ is a maximal sub-effect algebra satisfying
the Riesz decomposition property. This definition of a block is consistent
with the definition of a block of the theory of orthoalgebras (maximal Boolean
sub-orthoalgebra) and also in the theory of lattice ordered effect algebras
(maximal mutually compatible subset).

In this section, we prove that blocks of homogeneous effect algebras coincide
with the maximal internally compatible subsets, which contain $1$.
As a consequence, every homogeneous effect algebra is a union of its blocks.

The main tool we use is the closure operation $M\mapsto\overline{M}$
which is defined on the system of all subsets of an effect algebra $E$ in the
following way. Let $M$ be a subset of an effect algebra $E$.
First we define certain subsets
$M_n$ ($n\in \Nat$) of $E$ as follows~: $M_0=M$ and for $n\in \Nat$
\begin{eqnarray}
\label{closure}
M_{n+1}&=&\{x:x\leq y,y'\text{ for some }y\in M_n\}\cup\\
\notag & &\{y\ominus x:x\leq y,y'\text{ for some }y\in M_n\}\text{.}
\end{eqnarray}
Then we put $\overline M=\bigcup_{n\in \Nat}M_n$. 
Note that, for all $n\in \Nat$, $M_n\subseteq M_{n+1}$ and that 
$\overline{\overline M}=\overline M$. In an orthoalgebra, $M=\overline M$ for
every set $M$.

\begin{lemma}
\label{maxcompat}
Let $E$ be an effect algebra. Let $M$ be an compatible subset of $E$. 
Then $M$ can be embedded into a maximal compatible
subset of $E$.
\end{lemma}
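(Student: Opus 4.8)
The plan is to apply Zorn's lemma to the family of compatible subsets of $E$ that contain $M$. The feature that makes this work is that compatibility, as defined in the excerpt, is a property of \emph{finite character}: a subset $N\subseteq E$ is compatible precisely when every finite subset of $N$ is compatible with covers in $E$. This is exactly the ingredient needed to guarantee that the union of a chain of compatible sets is again compatible. Concretely, I would set $\mathcal P=\{N\subseteq E: M\subseteq N\text{ and }N\text{ is compatible}\}$, ordered by inclusion; $\mathcal P$ is nonempty since $M\in\mathcal P$ by hypothesis.

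Next I would verify the chain condition. Let $\mathcal C\subseteq\mathcal P$ be a chain and put $N=\bigcup\mathcal C$, so that $M\subseteq N$. To check that $N$ is compatible, take any finite subset $F=\{a_1,\dots,a_k\}\subseteq N$. Each $a_i$ lies in some $N_i\in\mathcal C$; since $\mathcal C$ is totally ordered and $F$ is finite, the finitely many sets $N_1,\dots,N_k$ have a largest member $N^{*}\in\mathcal C$, whence $F\subseteq N^{*}$. As $N^{*}$ is compatible, $F$ is compatible with covers in $E$. Because $F$ was an arbitrary finite subset of $N$, the set $N$ is compatible, so $N\in\mathcal P$ and $N$ is an upper bound for $\mathcal C$.

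Finally, Zorn's lemma yields a maximal element $B$ of $\mathcal P$, and I would close by observing that $B$ is in fact maximal among \emph{all} compatible subsets of $E$, not merely within $\mathcal P$: if $B'$ is compatible with $B\subseteq B'$, then $M\subseteq B\subseteq B'$ forces $B'\in\mathcal P$, and maximality of $B$ in $\mathcal P$ gives $B'=B$. Thus $B$ is a maximal compatible subset of $E$ containing $M$, as claimed. I do not expect a genuine obstacle; the only point needing care is the step passing from ``each element of $F$ lies in some member of the chain'' to ``$F$ lies in a single member of the chain'', which is precisely where the finiteness of $F$ together with the chain hypothesis is used, and which rests on the finite-character definition of compatibility.
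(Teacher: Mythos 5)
Your proof is correct and is exactly the argument the paper has in mind: the paper's own proof just says ``an easy application of Zorn's lemma and is left to the reader,'' and your write-up supplies the standard details, correctly identifying the finite character of compatibility as the reason the union of a chain of compatible sets is compatible.
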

\begin{proof}
The proof is an easy application of Zorn's lemma and is left to the
reader.
\end{proof}

\begin{proposition}\label{crucial}
Let $E$ be a homogeneous effect algebra. Let $M\subseteq E$ be a
finite compatible set, $a,b\in M$, $a\geq b$. Let 
$C=(c_1,\ldots,c_k)$ be
an orthogonal cover of $M$. Let $A,B\subseteq \{1,\ldots,k\}$
be such that $a=\bigoplus_{i\in A}c_i$ and $b=\bigoplus_{i\in B} c_i$.
Then, there is a refinement of $C$,
say $W=(w_1,\ldots,w_n)$ and sets $B_W\subseteq A_W\subseteq\{1,\ldots,n\}$
such that
$(w_i)_{i\in A_W}$ is a refinement of $(c_i)_{i\in A}$ and
$(w_i)_{i\in B_W}$ is a refinement of $(c_i)_{i\in B}$. Moreover,
we have $Ran(W)\subseteq\overline{Ran(C_0)}$.

\end{proposition}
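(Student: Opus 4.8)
The plan is to reduce everything to the three disjoint pieces of the index sets. First I would partition the relevant indices into $A\cap B$, $A\setminus B$ and $B\setminus A$ and put $p=\bigoplus_{i\in A\cap B}c_i$, $q=\bigoplus_{i\in A\setminus B}c_i$, $r=\bigoplus_{i\in B\setminus A}c_i$, so that $a=p\oplus q$ and $b=p\oplus r$. Since $A\setminus B$ and $B\setminus A$ are disjoint subsets of the index set of the orthogonal family $C$, the sum $q\oplus r$ exists, hence $r\leq q'$. On the other hand $b\leq a$ together with cancellativity gives $r\leq q$ (cancel $p$ in $p\oplus r\oplus s=p\oplus q$). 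Thus $r\leq q$ and $r\leq q'$ hold simultaneously, which is exactly the configuration on which homogeneity bites.

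The heart of the argument is a doubly indexed refinement extracted from Proposition \ref{homogeneousn}. I would prove, by induction on $|B\setminus A|$, that there are elements $w_{ij}$ (with $i\in A\setminus B$ and $j\in B\setminus A$) such that $c_j=\bigoplus_{i\in A\setminus B}w_{ij}$ for every $j$, while for every $i$ the sum $t_i:=\bigoplus_{j\in B\setminus A}w_{ij}$ exists and satisfies $t_i\leq c_i$. For the inductive step I single out one index $j_0$, write $\tilde r=\bigoplus_{j\neq j_0}c_j$, and apply the hypothesis to the smaller family to obtain $w_{ij}$ ($j\neq j_0$) with partial column sums $s_i:=\bigoplus_{j\neq j_0}w_{ij}\leq c_i$ and $\tilde r=\bigoplus_i s_i$. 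Then $c_{j_0}=r\ominus\tilde r\leq q\ominus\tilde r=\bigoplus_i(c_i\ominus s_i)$, and since $c_{j_0}\leq r\leq q'\leq(q\ominus\tilde r)'$, Proposition \ref{homogeneousn} yields $w_{ij_0}\leq c_i\ominus s_i$ with $c_{j_0}=\bigoplus_i w_{ij_0}$; the new column sums $s_i\oplus w_{ij_0}\leq c_i$ close the induction.

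With the $w_{ij}$ in hand I would assemble $W$ block by block along the partition witnessing that $W$ refines $C$: the block of $c_i$ is $\{c_i\}$ for $i\in A\cap B$ and for $i\notin A\cup B$, it is $\{c_j\}$ for $j\in B\setminus A$, and for $i\in A\setminus B$ it is $\{w_{ij}:j\in B\setminus A\}\cup\{c_i\ominus t_i\}$, which sums to $t_i\oplus(c_i\ominus t_i)=c_i$. A short computation gives $\bigoplus W=\bigoplus C$, so $W$ is a genuine refinement. I then take $A_W$ to consist of the indices of the $c_i$ ($i\in A\cap B$) together with all $w_{ij}$ and all $c_i\ominus t_i$ ($i\in A\setminus B$); grouping these by the originating $c_i$ exhibits $(w_i)_{i\in A_W}$ as a refinement of $(c_i)_{i\in A}$. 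The decisive point is the choice of $B_W$: instead of using the blocks of the $c_j$ ($j\in B\setminus A$), I reuse the $w_{ij}$, letting $B_W$ be the indices of the $c_i$ ($i\in A\cap B$) together with all $w_{ij}$. Grouping the $w_{ij}$ by the index $j$ gives $\bigoplus_i w_{ij}=c_j$, so $(w_i)_{i\in B_W}$ is a refinement of $(c_i)_{i\in B}$; and since every $w_{ij}$ already belongs to $A_W$, we obtain $B_W\subseteq A_W$.

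Finally, for the range condition I would check that every element of $W$ lies in $\overline{Ran(C)}$, in fact already at the first level $M_1$. Each $w_{ij}$ satisfies $w_{ij}\leq c_i$ and, because $w_{ij}\leq c_j\leq r\leq q'\leq c_i'$, also $w_{ij}\leq c_i'$; likewise $t_i\leq c_i$ and $t_i\leq r\leq c_i'$, whence $w_{ij}$, $t_i$ and $c_i\ominus t_i$ all arise from $c_i\in Ran(C)$ by a single application of the defining rule~(\ref{closure}). The main obstacle is the doubly indexed refinement of the second paragraph: one must run the induction so that the partial column sums stay below the corresponding $c_i$, and then recognize that the same family $(w_{ij})$ plays two roles at once — as part of the refinement of $c_i$ on the $A$-side, and, regrouped by $j$, as the refinement of $c_j$ on the $B$-side. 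This double role is exactly what forces $B_W\subseteq A_W$.
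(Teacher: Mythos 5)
Your proof is correct and follows essentially the same route as the paper's: both argue by induction on $|B\setminus A|$, using the observation that $r=\bigoplus_{j\in B\setminus A}c_j$ satisfies $r\leq q$ and $r\leq q'$ for $q=\bigoplus_{i\in A\setminus B}c_i$, and then invoking Proposition~\ref{homogeneousn} to split one element of $B\setminus A$ at a time across the (remainders of the) $c_i$ with $i\in A\setminus B$, the pieces landing in $M_1\subseteq\overline{Ran(C)}$. The only difference is organizational: the paper recurses on the proposition itself with successively refined covers, while you isolate the doubly indexed family $(w_{ij})$ as an auxiliary claim and assemble the refinement $W$, $A_W$, $B_W$ in a single final step.
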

\begin{proof}
If $|B\setminus A|=0$ then $B\subseteq A$ and there is nothing
to prove.

Let $l\in\Nat$. Assume that Proposition \ref{crucial} holds for all
$C,A,B$ with $|B\setminus A|=l$.
Let $C_0,A_0,B_0$ be as in the assumption of Proposition \ref{crucial},
with $|B_0\setminus A_0|=l+1$. 

To avoid double indices, 
we may safely assume that $A_0$ and $B_0$ are such that,
for some $0\leq r,s,t\leq k$,
$B_0\setminus A_0=\{1,\ldots,r\}$,
$B_0\cap A_0=\{r+1,\ldots,s\}$, 
$A_0\setminus B_0=\{s+1,\ldots,t\}$. 

Write $b_1=c_1\oplus\ldots\oplus c_{l+1}$, 
$d=c_{l+2}\oplus\ldots\oplus c_s$, $a_1=c_{s+1}\oplus\ldots\oplus c_t$.
Since $b_1\oplus d=b\leq a=a_1\oplus d$, we see that $c_{l+1}\leq b_1\leq a_1$.
Since $C_0$ is an orthogonal family, $c_{l+1}\leq {a_1}'$.
By Proposition \ref{homogeneousn}, this implies that there are 
$v_{s+1},\ldots,v_t$ such that,
for all $s+1\leq i\leq t$, $v_i\leq c_i$ and 
$c_{l+1}=v_{s+1}\oplus\ldots\oplus v_t$.
Let us construct a refinement of $C_0$, say $C_1=(e_i)$, as follows.
\begin{eqnarray*}
C_1&=&(c_1,\ldots,c_l,
v_{s+1},\ldots,v_t,
c_{l+2},\ldots,c_s,\\
& &
c_{s+1}\ominus v_{s+1},\ldots,c_t\ominus v_t,
c_{t+1},\ldots,c_k,c_{l+1})
\end{eqnarray*}
Obviously, $C_1$ is a refinement of $C_0$ and 
$Ran(C_1)\subseteq\overline{Ran(C_0)}$. Moreover,
we have
\begin{center}
$
b=\bigoplus(c_1,\ldots,c_l,
v_{s+1},\ldots,v_t,
c_{l+2},\ldots,c_s)
$
\end{center}
and
\begin{center}
$
a=\bigoplus(v_{s+1},\ldots,v_t,
c_{l+2},\ldots,c_s,
c_{s+1}\ominus v_{s+1},\ldots,c_t\ominus v_t)\text{.}
$
\end{center}
By latter equations, we can find 
sets $A_1,B_1$ of indices such that $a=\bigoplus_{i\in A_1}e_i$,
$b=\bigoplus_{i\in B_1}e_i$ and
$B_1\setminus A_1=\{1,\ldots,l\}$. Moreover,
$(e_i)_{i\in A_1}$ is a refinement of $(c_i)_{i\in A_0}$ and
$(e_i)_{i\in B_1}$ is a refinement of $(c_i)_{i\in B_0}$.
As $|B_1\setminus A_1|=l$, we may apply the 
induction hypothesis on $C_1,A_1,B_1$ to find a refinement
$W=(w_1,\ldots,w_n)$ of $C_1$ with $Ran(W)\subseteq \overline{Ran(C_1)}$ 
and sets $B_W\subseteq A_W\subseteq\{1,\ldots,n\}$
such that $(w_i)_{i\in A_W}$ is a refinement of $(e_i)_{i\in A_1}$ and
$(w_i)_{i\in B_W}$ is a refinement of $(e_i)_{i\in B_1}$.
Obviously, $W$ is a refinement of $C_0$ and we see that
$$
Ran(W)\subseteq\overline{Ran(C_1)}\subseteq\overline{\overline{Ran(C_0)}}=
\overline{Ran(C_0)}\text{.}
$$
Similarly, $(w_i)_{i\in A_W}$ is a refinement of $(c_i)_{i\in A_0}$ and
$(w_i)_{i\in B_W}$ is a refinement of $(c_i)_{i\in B_0}$.
This concludes the proof. 
\end{proof}

\begin{corollary}
\label{minuscompat}
Let $M$ be a finite compatible subset of a homogeneous effect algebra $E$.
Let $a,b\in M$ be such that $a\geq b$. Then $M\cup\{a\ominus b\}$ is a
compatible set.
\end{corollary}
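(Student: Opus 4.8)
The plan is to exhibit a single finite orthogonal family that is an orthogonal cover of the whole set $M\cup\{a\ominus b\}$; since this set is finite (because $M$ is), producing one such cover with range in $E$ establishes compatibility directly from the definition. To begin, I would use the compatibility of the finite set $M$ to fix an orthogonal cover $C=(c_1,\ldots,c_k)$ of $M$, together with index sets $A,B\subseteq\{1,\ldots,k\}$ such that $a=\bigoplus_{i\in A}c_i$ and $b=\bigoplus_{i\in B}c_i$. Since $a\geq b$ and both lie in $M$, these data are exactly the hypotheses of Proposition \ref{crucial}.

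The decisive step is to apply Proposition \ref{crucial} to $C,A,B$. This supplies a refinement $W=(w_1,\ldots,w_n)$ of $C$ and index sets $B_W\subseteq A_W\subseteq\{1,\ldots,n\}$ with $(w_i)_{i\in A_W}$ a refinement of $(c_i)_{i\in A}$ and $(w_i)_{i\in B_W}$ a refinement of $(c_i)_{i\in B}$. The essential content is the inclusion $B_W\subseteq A_W$, which aligns the cover of $b$ inside the cover of $a$. I would then check that $W$ is still an orthogonal cover of $M$: because $W$ refines $C$, there is a partition of $\{1,\ldots,n\}$ whose blocks sum to the respective $c_i$, and composing this with the representation of each element of $M$ as a sum of some subfamily of $C$ shows that each element of $M$ is again a sum of a suitable subfamily of $W$. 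In particular $a=\bigoplus_{i\in A_W}w_i$ and $b=\bigoplus_{i\in B_W}w_i$.

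Finally, since $(w_i)_{i\in A_W}$ is an orthogonal family and $B_W\subseteq A_W$, I would split $A_W=B_W\cup(A_W\setminus B_W)$ to get $a=b\oplus\bigoplus_{i\in A_W\setminus B_W}w_i$, whence $a\ominus b=\bigoplus_{i\in A_W\setminus B_W}w_i$ by the definition of $\ominus$. Thus $W$ covers every element of $M$ as well as $a\ominus b$, so $W$ is an orthogonal cover of $M\cup\{a\ominus b\}$ with $Ran(W)\subseteq\overline{Ran(C)}\subseteq E$; this shows $M\cup\{a\ominus b\}$ is compatible.

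The real difficulty is already packaged into Proposition \ref{crucial}: once the alignment $B_W\subseteq A_W$ is available, the rest is bookkeeping with refinements, and the only mild subtlety is the routine verification that a refinement of an orthogonal cover of $M$ is again an orthogonal cover of $M$, which follows from the transitivity of the refinement relation. I do not expect any genuine obstacle beyond invoking Proposition \ref{crucial} correctly.
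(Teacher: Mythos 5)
Your proof is correct and follows essentially the same route as the paper: both apply Proposition \ref{crucial} to an orthogonal cover of $M$ and then read off $a\ominus b=\bigoplus_{i\in A_W\setminus B_W}w_i$ from the inclusion $B_W\subseteq A_W$. The only difference is that you spell out the routine check that a refinement of a cover of $M$ is again a cover of $M$, which the paper leaves implicit.
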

\begin{proof}
Let $W,A_W,B_W$ be as in Proposition \ref{crucial}.
Then $a\ominus b=\bigoplus_{i\in A_W\setminus B_W}w_i$, so
$W$ is an orthogonal cover of $M\cup\{a\ominus b\}$.
\end{proof}

\begin{corollary}
Let $M$ be a finite compatible subset of a homogeneous effect algebra $E$.
Let $a,b\in M$ be such that $a\perp b$. Then $M\cup\{a\oplus b\}$ is
a compatible set.
\end{corollary}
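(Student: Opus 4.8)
The plan is to reduce this $\oplus$ statement to the already-established $\ominus$ statement, Corollary~\ref{minuscompat}, by passing to complements. The tool that makes this work is the identity $a\oplus b=(a'\ominus b)'$ noted in the preliminaries, which is available here since $a\perp b$ gives $b\le a'$, so that $a'\ominus b$ is defined. Thus it suffices to establish two things: first, that adjoining the complement $a'$ to a finite compatible set keeps it compatible, so that $M\cup\{a'\}$ is compatible; and second, that this same complement-stability turns the element $a'\ominus b$ produced by Corollary~\ref{minuscompat} into its complement $a\oplus b$.

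The key preliminary observation, which I would isolate and prove first, is that a finite compatible set stays compatible when we adjoin the complement of any of its members. Here lies the one real obstacle: an orthogonal cover $C=(c_1,\ldots,c_k)$ of $M$ need not have total sum $\bigoplus C=1$, so complements of covered elements are not automatically covered by $C$. I would remove this by enlarging the cover. Putting $s=\bigoplus C$ and forming $C^{+}=(c_1,\ldots,c_k,s')$, which is again an orthogonal family since $s\perp s'$, we get $\bigoplus C^{+}=1$ while $C^{+}$ still covers every element of $M$. For a covered element $x=\bigoplus_{i\in I}c^{+}_i$ we then have $x'=\bigoplus_{i\notin I}c^{+}_i$, because $x\oplus\bigoplus_{i\notin I}c^{+}_i=\bigoplus C^{+}=1$. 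Hence $C^{+}$ is an orthogonal cover of $M\cup\{x'\}$ as well, which proves the observation.

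With this in hand the argument is short. From $a\perp b$ we have $b\le a'$, and by the observation $M\cup\{a'\}$ is a finite compatible set containing the comparable pair $a'\ge b$. Applying Corollary~\ref{minuscompat} to this pair shows that $M\cup\{a',a'\ominus b\}$ is compatible. Since $a'\ominus b=(a\oplus b)'$ by the identity above, applying the complement observation once more, now to the covered element $a'\ominus b$, yields that $M\cup\{a\oplus b\}$ is compatible, as required.

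I expect essentially all of the work to sit in the complement-stability observation; once the cover is normalized to total sum $1$, everything reduces mechanically to Corollary~\ref{minuscompat} through $a\oplus b=(a'\ominus b)'$. An alternative that avoids the explicit reduction is to apply Proposition~\ref{crucial} directly to the pair $(a',b)$ inside the normalized cover $C^{+}$: it produces a refinement $W=(w_1,\ldots,w_n)$ with index sets $B_W\subseteq A'_W$ for $b$ and $a'$, so that the index set $A_W=\{1,\ldots,n\}\setminus A'_W$ for $a$ is disjoint from $B_W$ and $W$ covers $a\oplus b=\bigoplus_{i\in A_W\cup B_W}w_i$; but this is the same idea recast in the language of covers, and I would prefer the reduction above for brevity.
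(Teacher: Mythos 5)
Your proof is correct and takes essentially the same route as the paper: reduce to Corollary~\ref{minuscompat} via the identity $a\oplus b=(a'\ominus b)'$, after first establishing that a finite compatible set remains compatible when complements of its members are adjoined. The paper dismisses that complement-stability step as ``easy to check,'' and your normalization of the cover by appending $(\bigoplus C)'$ so that the total sum becomes $1$ is exactly the intended verification.
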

\begin{proof}
It is easy to check that, for every compatible set $M_0$, 
$M_0\cup {M_0}'=M_0\cup\{a':a\in M_0\}$ is a compatible set.
The rest follows from Corollary \ref{minuscompat} and
from the equation $a\oplus b=(a'\ominus b)'$.
\end{proof}

\begin{theorem}
Let $E$ be an effect algebra. The following are equivalent.
\label{equiv}
\begin{enumerate}
\item[(a)] $E$ satisfies Riesz decomposition property.
\item[(b)] $E$ is homogeneous and compatible.
\end{enumerate} 
\end{theorem}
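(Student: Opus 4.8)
The plan is to prove the two implications separately; (a)$\Rightarrow$(b) is routine while (b)$\Rightarrow$(a) carries the real content.

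For (a)$\Rightarrow$(b), homogeneity is immediate from Proposition \ref{agoodclass}(b), so only compatibility needs proof. First I would establish the Riesz refinement property: if $x=\bigoplus_i p_i=\bigoplus_j q_j$ are two orthogonal decompositions of one element, then there is a family $(r_{ij})$ with $p_i=\bigoplus_j r_{ij}$ and $q_j=\bigoplus_i r_{ij}$. This follows from the Riesz decomposition property by a standard induction on the number of summands: decompose $p_1\leq\bigoplus_j q_j$ first, pass to $x\ominus p_1$, and recurse. Granting this, to cover a finite set $\{a_1,\dots,a_m\}$ I would take the $m$ decompositions $1=a_l\oplus a_l'$ and form their common refinement $(c_1,\dots,c_k)$ by iterating the two-decomposition case; then $\bigoplus_i c_i=1$ and each $a_l$ is a subsum of $(c_i)$, so $(c_i)$ is an orthogonal cover and $E$ is compatible.

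For (b)$\Rightarrow$(a), by the observation in the excerpt that the Riesz decomposition property is equivalent to its form with $n=2$, it suffices to fix $u,v_1,v_2$ with $v_1\perp v_2$ and $u\leq w:=v_1\oplus v_2$, and to produce $u_1\leq v_1$, $u_2\leq v_2$ with $u=u_1\oplus u_2$. Since $E$ is compatible, $\{u,v_1,v_2,w\}$ has an orthogonal cover $(c_1,\dots,c_k)$, with index sets $U,P,Q,R$ such that $u=\bigoplus_{i\in U}c_i$, $v_1=\bigoplus_{i\in P}c_i$, $v_2=\bigoplus_{i\in Q}c_i$, $w=\bigoplus_{i\in R}c_i$. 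The obstacle is that these index sets need not interact well, and I cannot split $u$ one $c_i$ at a time, since homogeneity decomposes an element only when it lies below \emph{both} $w$ and $w'$, which $u$ need not. Instead I would manoeuvre the cover into the position $U\subseteq P\cup Q$, from which the decomposition is read off combinatorially, with no further use of homogeneity.

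To do this I would apply Proposition \ref{crucial} twice. First, with $a=w\geq b=v_1$, refine the cover so that $P\subseteq R$; then $v_2=w\ominus v_1=\bigoplus_{i\in R\setminus P}c_i$, so I may take $Q=R\setminus P$ and get $R=P\sqcup Q$. Second, applying Proposition \ref{crucial} with $a=w\geq b=u$ to this cover, refine further so that $U\subseteq R$; because a refinement merely subdivides each $c_i$ along a partition of the new index set, the disjoint splitting $R=P\sqcup Q$ is inherited by the finer cover, whence $U\subseteq R=P\sqcup Q$. Finally I set $S_1=U\cap P$ and $S_2=U\setminus P\subseteq Q$, and put $u_1=\bigoplus_{i\in S_1}c_i$, $u_2=\bigoplus_{i\in S_2}c_i$; then $u=u_1\oplus u_2$ since $S_1\sqcup S_2=U$, while $S_1\subseteq P$ and $S_2\subseteq Q$ give $u_1\leq v_1$ and $u_2\leq v_2$.

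The main obstacle I anticipate is the bookkeeping in this double application of Proposition \ref{crucial}: one must verify that the second refinement preserves the relation $R=P\sqcup Q$ set up by the first (it does, since refinement subdivides each $c_i$ along a partition and disjoint index sets stay disjoint under this operation) and that all four elements remain covered throughout (refinements of covers are covers). The key simplification that makes the argument go through is that I only need $U\subseteq P\cup Q$, not any disjointness of $P$ and $Q$; once the cover has this shape, homogeneity has already done its work inside Proposition \ref{crucial} and the construction of $u_1,u_2$ is purely combinatorial.
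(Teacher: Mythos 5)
Your proposal is correct and follows essentially the same route as the paper: for (a)$\Rightarrow$(b) the paper likewise builds an orthogonal cover inductively by splitting each new element against the existing cover via RDP (your detour through the Riesz refinement property is just a repackaging of that induction), and for (b)$\Rightarrow$(a) the paper performs exactly your double application of Proposition \ref{crucial} --- first to place the index set of $v_1$ inside that of $v_1\oplus v_2$, then to place the index set of $u$ there as well --- and reads off $u_1,u_2$ combinatorially as $\bigoplus_{i\in U\cap Z_1}q_i$ and $\bigoplus_{i\in U\cap(Z\setminus Z_1)}q_i$. No gaps; the bookkeeping point you flag (that the second refinement preserves the splitting) is handled in the paper by the clause of Proposition \ref{crucial} guaranteeing that $(q_i)_{i\in Z}$ refines $(w_i)_{i\in V}$.
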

\begin{proof}
(a) implies (b): It is evident that $E$ is homogeneous. It remains to
prove that every $n$-element subset of $E$ is compatible.
For $n=1$, there is nothing to prove. 
For $n>1$, let us assume that every $(n-1)$-element subset of $E$ is
compatible. Let $X=\{x_1,\ldots,x_n\}$ be a subset of $E$.
By induction hypothesis, $X_0=\{x_1,\ldots,x_{n-1}\}$ is compatible.
Thus, there is an orthogonal cover of $X_0$, say
$C=(c_1,\ldots,c_k)$. Since $x_n\leq (\bigoplus C)\oplus(\bigoplus C)'$ and
$E$ satisfies Riesz decomposition property, there exist
$y_1,y_2$ such that $y_1\leq (\bigoplus C)$, $y_2\leq(\bigoplus C)'$ and
$x_n=y_1\oplus y_2$. Since $y_1\leq (\bigoplus C)$, there
are $z_1,\ldots,z_k$ such that, for all $1\leq i\leq k$,
$z_i\leq c_i$ and $y_1=z_1\oplus\ldots\oplus z_k$.
Consequently, 
$$
(z_1,c_1\ominus z_1,\ldots,z_k,c_k\ominus z_k,y_2)
$$
is an orthogonal cover of $X$ and $X$ is compatible.

(b) implies (a): Let $u,v_1,v_2\in E$ be such that 
$v_1\perp v_2$, $u\leq v_1\oplus v_2$. If $v_1=0$ or $v_2=0$, there is
nothing to prove. Thus, let us assume that $v_1,v_2\not =0$.
By Proposition \ref{crucial}, $v_1\leq v_1\oplus v_2$ implies that 
there an orthogonal cover
$W=(w_1,\ldots,w_m)$ of $\{u,v_1,v_2,v_1\oplus v_2\}$ such that, for some 
$V_1\subseteq V\subseteq \{1,\ldots,m\}$, we have
$\bigoplus_{i\in V}w_i=v_1\oplus v_2$ and $\bigoplus_{i\in V_1}w_i=v_1$.
This implies that $\bigoplus_{i\in V\setminus V_1}w_i=v_2$.
By Proposition \ref{crucial}, $u\leq v_1\oplus v_2$ implies that 
there is a refinement of $W$, say $Q=(q_1,\ldots,q_n)$, and some
$U\subseteq Z\subseteq \{1,\ldots,n\}$ such that
$\bigoplus_{i\in U}q_i=u$ and $\bigoplus_{i\in Z}q_i=v_1\oplus v_2$.
Moreover, by Proposition \ref{crucial}, we may assume that
$(q_i)_{i\in Z}$ is a refinement of $(w_i)_{i\in V}$.
This implies that there is $Z_1\subseteq Z$ such that
$\bigoplus_{i\in Z_1}q_i=v_1$.
Put $u_1=\bigoplus_{i\in U\cap Z_1}q_i$ and
$u_2=\bigoplus_{i\in U\cap(Z\setminus Z_1)}q_i$.
It remains to observe that 
$u=u_1\oplus u_2$, $u_1\leq v_1$ and $u_2\leq v_2$.
\end{proof}

\begin{example}
Let $R_6$ be a 6-elements effect algebra with two atoms $\{a,b\}$,
satisfying equation 
$a\oplus a\oplus a=a\oplus b\oplus b=1$. Since
$(a,b,b)$ is an orthogonal cover of $R_6$, $R_6$ is a compatible
effect algebra. However, $R_6$ does not satisfy Riesz decomposition
property, since $a\leq b\oplus b$ and $a\land b=0$.
This example shows that there are compatible effect algebras that do
not satisfy Riesz decomposition property. 
\end{example}

\begin{proposition}
\label{intclosure}
Let $M$ be a subset of a homogeneous effect algebra $E$ such that
$M$ is compatible with covers in $\overline M$. Then $\overline M$ is
internally compatible.
\end{proposition}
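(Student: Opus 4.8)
The plan is to prove, by induction on $n$, the apparently stronger statement that every finite subset of $M_n$ admits an orthogonal cover whose range is contained in $\overline M$. Since the $M_n$ are increasing and any finite $F\subseteq\overline M=\bigcup_n M_n$ therefore lies in some $M_N$, this immediately gives that $\overline M$ is compatible with covers in $\overline M$, i.e.\ internally compatible. The base case $n=0$ is precisely the hypothesis that $M$ is compatible with covers in $\overline M$.

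The central tool I would isolate is a single-element extension lemma: if a finite set $G$ has an orthogonal cover $C=(c_1,\ldots,c_k)$ with $Ran(C)\subseteq\overline M$, and if $x\leq y,y'$ for some $y\in G$, then $G\cup\{x,y\ominus x\}$ again has an orthogonal cover with range in $\overline M$. To see this, write $y=\bigoplus_{i\in A}c_i$. Since $x\leq y=\bigoplus_{i\in A}c_i$ and $x\leq y'=(\bigoplus_{i\in A}c_i)'$, Proposition \ref{homogeneousn} applied to the family $(c_i)_{i\in A}$ yields $t_i\leq c_i$ with $x=\bigoplus_{i\in A}t_i$. The key point, and where homogeneity really bites, is that these pieces stay in the closure: from $x=\bigoplus_{i\in A}t_i\leq(\bigoplus_{i\in A}c_i)'$ and $c_j\leq\bigoplus_{i\in A}c_i$ one gets $t_j\leq(\bigoplus_{i\in A}c_i)'\leq c_j'$, so $t_j\leq c_j,c_j'$. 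Hence both $t_j$ and $c_j\ominus t_j$ arise from $c_j\in\overline M$ by a single closure step and so lie in $\overline M$. Refining $C$ by splitting each $c_i$ ($i\in A$) into $t_i$ and $c_i\ominus t_i$ produces an orthogonal family $C'$ with $Ran(C')\subseteq\overline M$ which still covers $G$ (refinements preserve coverage) and in addition covers $x=\bigoplus_{i\in A}t_i$ and $y\ominus x=\bigoplus_{i\in A}(c_i\ominus t_i)$.

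Granting this lemma, the inductive step proceeds as follows. Let $F\subseteq M_{n+1}$ be finite. By the definition of $M_{n+1}$, each $a\in F\setminus M_n$ carries a witness $y_a\in M_n$ and an element $x_a\leq y_a,y_a'$ with $a\in\{x_a,y_a\ominus x_a\}$. Put $G_0=(F\cap M_n)\cup\{y_a:a\in F\setminus M_n\}$, a finite subset of $M_n$, which by the induction hypothesis has an orthogonal cover with range in $\overline M$. Now process the elements $a\in F\setminus M_n$ one at a time, at each stage applying the extension lemma to the current covered set and the pair $(y_a,x_a)$ to adjoin $x_a$ and $y_a\ominus x_a$. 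Each step only refines the cover, and all the $y_a$ belong to $G_0$, so they remain covered throughout and the lemma keeps applying. After finitely many steps the covered set contains every $a\in F\setminus M_n$ together with $F\cap M_n$, hence all of $F$, and still has an orthogonal cover with range in $\overline M$; this cover is then a cover of $F$, completing the induction.

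The main obstacle is the verification inside the extension lemma that the decomposed pieces satisfy $t_i\leq c_i'$ and therefore stay in $\overline M$. This is exactly the interplay between homogeneity (Proposition \ref{homogeneousn}) and the specific ``$x\leq y,y'$'' shape of the closure operation, and it is what lets the closure absorb the newly created pieces. Everything else — the bookkeeping of iterated refinements and the fact that a refinement of a cover is again a cover — is routine.
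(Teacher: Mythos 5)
Your proof is correct and takes essentially the same route as the paper: an outer induction on $n$ showing each $M_n$ is compatible with covers in $\overline M$, with your single-element extension lemma playing exactly the role of the paper's inner Claim (which likewise processes the pairs $(x,y\ominus x)$ one at a time by refining the current cover). The key verification $t_j\leq c_j,c_j'$, which guarantees that the pieces produced by Proposition \ref{homogeneousn} remain in the closure, is also the same as in the paper.
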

\begin{proof}
Consider (\ref{closure}). Since each finite subset of $\overline M$ can be
embedded into some $M_n$, it suffices to 
prove that, for all $n\in\Nat$, $M_n$ is compatible with covers
in $\overline M$. By assumption, $M=M_0$ is compatible with covers in
$\overline M$. Assume that, for some $n\in \Nat$, $M_n$ is compatible with covers
in $\overline M$.
Evidently, every finite subset of $M_{n+1}$ can be embedded into a set of the
form 
\begin{equation}
\label{jedna}
\{x_1,y_1\ominus x_1,\ldots,x_k,y_k\ominus x_k\}\subseteq M_{n+1}\text{,}
\end{equation} where
for all $1\leq i\leq k$ we have $x_i\leq y_i,{y_i}'$ and $y_i\in M_n$.
We now prove the following

\noindent{\em Claim.} Let $x_i,y_i$ be as above.
For every cover $C_0$ of $\{y_1,\ldots,y_k\}$, there
is a refinement $W$ of $C_0$ such that $W$ covers 
$\{x_1,y_1\ominus x_1,\ldots,x_k,y_k\ominus x_k\}$ and
$Ran(W)\subseteq\overline{Ran(C_0)}$.

\noindent{\em Proof of the Claim.} For $k=0$, we may put $W=C_0$. Assume that
the Claim is satisfied for some $k=l\in \Nat$. Let $C_0$ be a cover of
$\{y_1,\ldots,y_{l+1}\}\subseteq M_{n}$. Since $C_0$ is a cover of
$\{y_1,\ldots,y_l\}$ as well, by induction hypothesis there is a refinement of
$C_0$, say $C_1$, such that $C_1$ covers $\{x_1,y_1\ominus
x_1,\ldots,x_l,y_l\ominus x_l\}$ and $Ran(C_1)\subseteq\overline{Ran(C_0)}$.
As $C_1$ is a refinement of $C_0$, $C_1$ covers $\{y_1,\ldots,y_{l+1}\}$.
Thus, there are $(c_1,\ldots,c_m)\subseteq C_1$ such that
$y_{l+1}=c_1\oplus\ldots\oplus c_m$. Since $x_{l+1}\leq y_{l+1},{y_{l+1}}'$,
Proposition \ref{homogeneousn} implies that there are $z_1,\ldots,z_m$ such
that, for all $1\leq i\leq m$, $z_i\leq c_i$ and $x_{l+1}=z_1\oplus\ldots\oplus
z_l$. Let us construct a refinement $W$ of $C_1$ by replacing each of the
$c_i$'s by the pair $(z_i,c_i\ominus z_i)$. Then $W$ is a refinement of $C_1$
and $W$ covers $\{x_1,y_1\ominus x_1,\ldots,x_{l+1},y_{l+1}\ominus x_{l+1}\}$.
Moreover, for all $1\leq i\leq m$, 
$z_i\leq x_{l+1}\leq {y_{l+1}}'\leq {c_i}'$, hence 
$$
Ran(W)\subseteq\overline{Ran(C_1)}\subseteq\overline{\overline{Ran(C_0)}}=
\overline{Ran(C_0)}\text {.}
$$

Now, let $M_F$ be a finite subset of $M_{n+1}$. We may assume that
$M_F$ is of the form (\ref{jedna}). By the outer induction hypothesis, $M_n$ is
compatible with covers in $\overline M$, thus
$\{y_1,\ldots,y_k\}$ is compatible with cover in $\overline M$. Let
$C$ be an orthogonal cover of $\{y_1,\ldots,y_k\}$ with
$Ran(C)\subseteq\overline M$. By Claim, there is a refinement $W$ of
$C$, such that $W$ covers $M_F$ and 
$Ran(W)\subseteq\overline{Ran(C)}\subseteq\overline{\overline M}=\overline M$.
Thus, $M_F$ is compatible with covers in $\overline M$ and we see that
$\overline M$ is internally compatible.
\end{proof}

The following are immediate consequences of Proposition \ref{intclosure}.

\begin{corollary}~
\label{theone}
\begin{enumerate}
\item[(a)]
Let $M$ be an internally compatible subset of a homogeneous effect algebra
$E$. Then $\overline M$ is an internally compatible set.
\item[(b)]
Let $M$ be a maximal internally compatible subset of a homogeneous
effect algebra $E$. Then $M=\overline M$.
\end{enumerate}
\end{corollary}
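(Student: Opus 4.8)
The plan is to derive both parts directly from Proposition \ref{intclosure}, relying on only two elementary observations about the closure operator and about the relation ``compatible with covers in $X$''.

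The first observation I would record is that $M\subseteq\overline M$ for every set $M$: indeed $M_0=M$ and $M_0\subseteq\bigcup_{n\in\Nat}M_n=\overline M$. The second is a monotonicity property of the relation ``compatible with covers in $X$'' with respect to the target set $X$: if $X\subseteq Y$ and $M$ is compatible with covers in $X$, then $M$ is compatible with covers in $Y$. This is immediate from the definition, since any orthogonal cover $C$ of a finite subset of $M$ with $Ran(C)\subseteq X$ automatically satisfies $Ran(C)\subseteq Y$.

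For part (a), I would start from the hypothesis that $M$ is internally compatible, that is, $M$ is compatible with covers in $M$. Since $M\subseteq\overline M$, the monotonicity observation yields that $M$ is compatible with covers in $\overline M$. This is precisely the hypothesis of Proposition \ref{intclosure}, which then gives that $\overline M$ is internally compatible, as claimed.

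For part (b), I would simply invoke part (a): if $M$ is maximal internally compatible, then $\overline M$ is an internally compatible set containing $M$, so maximality of $M$ forces $M=\overline M$. There is essentially no obstacle here; the only points that require care are the two elementary observations above, and in particular the monotonicity of ``compatible with covers'' in the target set, since it is exactly this that licenses applying Proposition \ref{intclosure} with $\overline M$ in place of the ambient target.
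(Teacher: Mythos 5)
Your proof is correct and follows exactly the route the paper intends: the paper states Corollary \ref{theone} as an immediate consequence of Proposition \ref{intclosure} without spelling out the details, and your two observations ($M\subseteq\overline M$ and monotonicity of ``compatible with covers in $X$'' in $X$) are precisely the ones needed to make that deduction explicit. Nothing further is required.
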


\begin{proposition}
\label{thetwo}
Let $E$ be a homogeneous effect algebra, let $M$ be an internally compatible
set with $M=\overline M$. Let $a,b\in M$, $a\geq b$. Then
$M\cup\{a\ominus b\}$ is an internally compatible set.
\end{proposition}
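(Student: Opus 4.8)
The plan is to reduce the internal compatibility of $M\cup\{a\ominus b\}$ to a single application of Proposition \ref{crucial}, exploiting the fact that the refinement produced there has range inside the closure of the original cover. First I would fix an arbitrary finite subset $M_F\subseteq M\cup\{a\ominus b\}$ and show that it is compatible with covers in $M\cup\{a\ominus b\}$. If $a\ominus b\notin M_F$, then $M_F\subseteq M$ and the internal compatibility of $M$ immediately supplies a cover with range in $M\subseteq M\cup\{a\ominus b\}$; so the only interesting case is $a\ominus b\in M_F$.

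In that case I would enlarge $M_F$ to the finite set $N=(M_F\setminus\{a\ominus b\})\cup\{a,b\}$, which is contained in $M$ because $a,b\in M$. Since $M$ is internally compatible, $N$ is compatible with covers in $M$, so there is an orthogonal cover $C=(c_1,\ldots,c_k)$ of $N$ with $Ran(C)\subseteq M$; in particular $C$ is a cover of the pair $a\geq b$, say $a=\bigoplus_{i\in A}c_i$ and $b=\bigoplus_{i\in B}c_i$. Note that $N$ qualifies as a finite compatible set in the sense of Proposition \ref{crucial}, since compatibility with covers in $M$ is in particular compatibility with covers in $E$. I would then apply Proposition \ref{crucial} to $C,A,B$ to obtain a refinement $W=(w_1,\ldots,w_n)$ of $C$, together with index sets $B_W\subseteq A_W$ for which $(w_i)_{i\in A_W}$ refines $(c_i)_{i\in A}$ and $(w_i)_{i\in B_W}$ refines $(c_i)_{i\in B}$, and with $Ran(W)\subseteq\overline{Ran(C)}$.

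Two features of $W$ then finish the argument. Exactly as in the proof of Corollary \ref{minuscompat}, one has $a\ominus b=\bigoplus_{i\in A_W\setminus B_W}w_i$; and since $W$ is a refinement of $C$, it covers every element of $N$ as well. Hence $W$ is an orthogonal cover of $N\cup\{a\ominus b\}\supseteq M_F$. For the range, from $Ran(C)\subseteq M=\overline{M}$ we get $\overline{Ran(C)}\subseteq\overline{\overline{M}}=\overline{M}=M$, so $Ran(W)\subseteq M\subseteq M\cup\{a\ominus b\}$. Thus $M_F$ is compatible with covers in $M\cup\{a\ominus b\}$, and as $M_F$ was arbitrary, $M\cup\{a\ominus b\}$ is internally compatible.

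The step I expect to be the crux is the range containment at the very end, which is the only place where the hypothesis $M=\overline{M}$ is genuinely used: Proposition \ref{crucial} guarantees only that $Ran(W)$ lies in $\overline{Ran(C)}$, and without the idempotent-closure assumption the refined cover could spill out of $M$ and thereby fail to witness internal compatibility. Everything else — the reduction to a single finite subset, the enlargement to include $a$ and $b$, and the verification that $W$ covers $a\ominus b$ — is routine bookkeeping that mirrors Corollary \ref{minuscompat}, the novelty here being that one tracks the cover's range rather than merely its existence.
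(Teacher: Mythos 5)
Your proof is correct and follows essentially the same route as the paper: the paper also covers $M_F\cup\{a,b\}$ inside $M$, invokes Proposition \ref{crucial} (packaged as Corollary \ref{minuscompat}) to refine that cover so it also covers $a\ominus b$, and then uses $\overline{Ran(C)}\subseteq\overline{M}=M$ to keep the refined cover inside $M$. Your version merely unfolds the corollary into a direct application of Proposition \ref{crucial} and spells out the (trivial) case $a\ominus b\notin M_F$; you have also correctly identified the range containment as the one place where $M=\overline{M}$ is needed.
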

\begin{proof}
Let $M_F$ be a finite subset of $M$. Since $M$ is internally compatible,
there is an orthogonal cover $C$ of $M_F\cup\{a,b\}$ with $Ran(C)\subseteq M$.
By Corollary \ref{minuscompat}, $M_F\cup\{a,b,a\ominus b\}$ is then compatible
with cover in $\overline{Ran(C)}$. Therefore, $M_F\cup\{a\ominus b\}$
is compatible with cover in $\overline{Ran(C)}$.
Since $\overline{Ran(C)}\subseteq\overline M=M$, $M\cup\{a\ominus b\}$ is
an internally compatible set.
\end{proof}

As we will show later in Example \ref{lastone}, a sub-effect algebra of a
homogeneous effect algebra need not to be homogeneous. However,
we have the following relationship on the positive side.

\begin{proposition}
\label{subalg}
Let $E$ be a homogeneous effect algebra. Let $F$ be a sub-effect algebra of $E$
such that $F=\overline F$, where the closure is taken in $E$.
Then $F$ is homogeneous.
\end{proposition}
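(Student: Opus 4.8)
The plan is to reduce the homogeneity of $F$ to the homogeneity of $E$, using the hypothesis $F=\overline F$ to ensure that the witnesses produced inside $E$ actually lie in $F$. So I would begin by fixing $u,v_1,v_2\in F$ with $v_1\perp v_2$, $u\leq v_1\oplus v_2$ and $u\leq(v_1\oplus v_2)'$, and abbreviate $v=v_1\oplus v_2$; note that $v\in F$ since $F$ is a sub-effect algebra and hence closed under $\oplus$. Viewing $u,v_1,v_2$ as elements of $E$ and applying the homogeneity of $E$, I obtain $u_1,u_2\in E$ with $u_1\leq v_1$, $u_2\leq v_2$ and $u=u_1\oplus u_2$. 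The only remaining task is to show that these witnesses can be taken in $F$.

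The key observation is that $u_1$ is automatically forced into $F$ by the closure condition. Indeed, $u_1\leq v_1\leq v$, and since $u=u_1\oplus u_2$ gives $u_1\leq u\leq v'$, we also have $u_1\leq v'$; thus $u_1\leq v,v'$. Because $v\in F=\overline F$, the very definition of the closure operation (\ref{closure}) places every element lying below both $v$ and $v'$ into $\overline F$ (concretely, $F=\overline F$ forces $F_1=F_0=F$, so any $x\leq v,v'$ with $v\in F$ already belongs to $F$). Hence $u_1\in\overline F=F$. Once $u_1\in F$ is established, $u_2=u\ominus u_1$ lies in $F$ as well, since $u,u_1\in F$ and a sub-effect algebra is closed under $\ominus$. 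This produces the required decomposition $u=u_1\oplus u_2$ inside $F$ with $u_1\leq v_1$ and $u_2\leq v_2$, so $F$ is homogeneous.

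I do not expect any serious obstacle here: the entire content is the remark that the auxiliary element $u_1$ satisfies $u_1\leq v,v'$, which is precisely the situation the closure operation is designed to capture. The only mild point requiring care is the double role of the hypotheses on $u$—once to bound $u_1$ below $v'$ via $u_1\leq u\leq v'$, and once to guarantee that $u_2=u\ominus u_1$ is defined—but both are immediate from $u=u_1\oplus u_2$ together with $u\leq v'$.
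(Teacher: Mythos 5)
Your proof is correct and follows essentially the same route as the paper: apply homogeneity of $E$ to get $u_1,u_2$, then observe that they lie below both $v_1\oplus v_2\in F$ and its complement, so the closure condition $F=\overline F$ forces them into $F$. The only cosmetic difference is that the paper places $u_2$ in $F$ by the same closure observation ($u_2\le v_2\le v$ and $u_2\le u\le v'$), whereas you route $u_2$ through $\ominus$-closedness of the sub-effect algebra; both are immediate.
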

\begin{proof}
Let $u,v_1,v_2\in F$
be such that $u\leq v_1\oplus v_2$ and $u\leq (v_1\oplus v_2)'$. Since
$E$ is homogeneous, there are $u_1,u_2\in E$ such that $u_1\leq v_1$,
$u_2\leq v_2$ and $u=u_1\oplus u_2$. For $i\in\{1,2\}$, we have
$u_i\leq v_1\oplus v_2$ and $u_i\leq(v_1\oplus v_2)'$. Thus,
$u_1,u_2\in\overline F=F$ and $F$ is homogeneous.
\end{proof}

\begin{theorem}
\label{maxcompatisblock}
Let $E$ be a homogeneous effect algebra, let $B\subseteq E$. 
The following are equivalent.
\begin{enumerate}
\item[(a)]$B$ is a maximal internally compatible set with $1\in B$.
\item[(b)]$B$ is a block.
\end{enumerate}
\end{theorem}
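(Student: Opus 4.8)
The plan is to prove the two implications separately, carrying out the substantive work in (a)$\Rightarrow$(b) and then obtaining (b)$\Rightarrow$(a) almost for free by bootstrapping off the forward direction with a Zorn's lemma argument.

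For (a)$\Rightarrow$(b), I would start from a maximal internally compatible set $B$ with $1\in B$ and first record, via Corollary~\ref{theone}(b), that $B=\overline B$. The crucial step is to upgrade $B$ to a sub-effect algebra, i.e.\ to show it is closed under $\ominus$: given $a,b\in B$ with $a\geq b$, both hypotheses of Proposition~\ref{thetwo} hold ($B=\overline B$ and $B$ is internally compatible), so $B\cup\{a\ominus b\}$ is again internally compatible; since it still contains $1$ and contains $B$, maximality of $B$ forces $a\ominus b\in B$. Once $B$ is known to be a sub-effect algebra, I would invoke the book-keeping identification that for a sub-effect algebra internal compatibility coincides with compatibility as an effect algebra (an orthogonal cover with range in $B$ automatically has all its partial sums in $B$, since $B$ is closed under $\oplus$). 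Together with $B=\overline B$, Proposition~\ref{subalg} then makes $B$ homogeneous, so Theorem~\ref{equiv} yields that $B$ satisfies the Riesz decomposition property. Maximality among such sub-algebras is the last piece: if $B\subseteq F$ with $F$ a sub-effect algebra satisfying RDP, then by Theorem~\ref{equiv} $F$ is compatible, hence internally compatible and containing $1$, so maximality of $B$ gives $B=F$; thus $B$ is a block.

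For (b)$\Rightarrow$(a), let $B$ be a block. By Theorem~\ref{equiv} it is compatible, hence (being a sub-effect algebra) internally compatible, and trivially $1\in B$. I would then embed $B$ into a maximal internally compatible set $M$ by a Zorn's lemma argument parallel to Lemma~\ref{maxcompat}; the only point to verify is that the union of a chain of internally compatible sets is internally compatible, which is immediate because internal compatibility is witnessed on finite subsets and every finite subset of the union already lies in a single member of the chain. Since $1\in B\subseteq M$, the set $M$ satisfies condition (a), so by the implication just proved $M$ is a block, in particular a sub-effect algebra with RDP containing $B$. Maximality of $B$ as such a sub-algebra forces $B=M$, so $B$ is itself a maximal internally compatible set.

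I expect the main obstacle to be the closure-under-$\ominus$ step in (a)$\Rightarrow$(b), where the argument rests on feeding Proposition~\ref{thetwo} into the maximality of $B$; the companion subtlety is the identification of internal compatibility of a sub-effect algebra with its compatibility as a standalone effect algebra, since that is precisely what allows Theorem~\ref{equiv} to convert ``homogeneous and compatible'' into the Riesz decomposition property. By contrast, the reverse implication should be routine once one sees that it reduces to the forward one through the Zorn embedding.
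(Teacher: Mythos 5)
Your proposal is correct and follows essentially the same route as the paper: Corollary~\ref{theone}(b) and Proposition~\ref{thetwo} plus maximality to get closure under $\ominus$, then Proposition~\ref{subalg} and Theorem~\ref{equiv} to obtain the Riesz decomposition property, and the Zorn-type embedding into a maximal internally compatible set to bootstrap the converse. The only difference is that you spell out two steps the paper leaves implicit --- the identification of internal compatibility with compatibility of $B$ as an effect algebra, and the verification that $B$ is maximal among RDP sub-effect algebras --- both of which you handle correctly.
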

\begin{proof}
Assume that (a) is satisfied. By Corollary \ref{theone}, part (b),
$B=\overline B$. By Proposition \ref{thetwo}, this implies that 
for all $a,b\in B$ such that $a\geq b$, $B\cup\{a\ominus b\}$ is an internally
compatible set. Therefore, by maximality of $B$, $B$ is closed with respect
to $\ominus$. Since $1\in B$, $B$ is a sub-effect algebra of $E$.
Since $B$ is an internally compatible set, 
$B$ is a compatible effect algebra. By Corollary \ref{theone}(b),
$B=\overline B$. By Proposition \ref{subalg}, this implies that
$B$ is homogeneous. Since $B$ is homogeneous and compatible,
Theorem \ref{equiv} implies that $B$ satisfies Riesz decomposition
property.

Assume that (b) is satisfied. By Theorem \ref{equiv},
$B$ is an internally compatible subset. By Lemma \ref{maxcompat},
$B$ can be embedded into a maximal internally compatible subset $B_{max}$ 
of $E$. By above part of the proof, $1\in B\subseteq B_{max}$ implies that 
$B_{max}$ is a block. Therefore, $B=B_{max}$ and (a) is satisfied.
\end{proof}

\begin{corollary}
\label{embedfinite}
Let $E$ be a homogeneous effect algebra. Every finite compatible subset of 
$E$ can be embedded into a block.
\end{corollary}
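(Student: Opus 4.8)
The plan is to reduce the statement to the machinery already assembled, chiefly Theorem~\ref{maxcompatisblock} and Lemma~\ref{maxcompat}. Let $M$ be a finite compatible subset of $E$. The obvious strategy is to enlarge $M$ to something that both (i) qualifies as a maximal internally compatible set containing $1$, and (ii) still contains the original $M$, so that by Theorem~\ref{maxcompatisblock} the enlargement is a block. The delicate point is that $M$ is given only as \emph{compatible} (that is, compatible with covers in all of $E$), whereas the block characterization is phrased in terms of \emph{internally} compatible sets. So the first task is to produce from $M$ an internally compatible set containing both $M$ and $1$.

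First I would pass to $M \cup \{1\}$, which is still finite and still compatible, since adjoining $1$ (with cover obtained by refining any cover of $M$ so that its full join appears as a subfamily, or simply noting $1 = x \oplus x'$ relative to the cover's total sum) preserves compatibility with covers in $E$. Next I would take the closure $\overline{M \cup \{1\}}$. Because $M \cup \{1\}$ is compatible with covers in $E$, and in particular compatible with covers in $\overline{M \cup \{1\}}$ (the covers witnessing compatibility can be taken inside the closure, as the closure is built precisely by refining along elements dominated by members and their complements), Proposition~\ref{intclosure} applies and yields that $\overline{M \cup \{1\}}$ is internally compatible. This is the step I expect to be the main obstacle: one must check carefully that the hypothesis of Proposition~\ref{intclosure}, namely that $M\cup\{1\}$ is compatible with covers in $\overline{M\cup\{1\}}$, is genuinely satisfied rather than merely compatible with covers somewhere in $E$; the argument is that any finite orthogonal cover witnessing compatibility of a finite subset can be refined so that its range lands in the closure, using the same refinement-into-closure technique established in Proposition~\ref{crucial} and its corollaries.

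Having produced an internally compatible set $N := \overline{M \cup \{1\}}$ with $1 \in N$ and $M \subseteq N$, I would invoke Lemma~\ref{maxcompat} to embed $N$ into a maximal internally compatible subset $B$ of $E$. Since $1 \in N \subseteq B$, the set $B$ is a maximal internally compatible set containing $1$, so by Theorem~\ref{maxcompatisblock} it is a block. As $M \subseteq N \subseteq B$, the finite compatible set $M$ is embedded into the block $B$, which is exactly the assertion of the corollary. The only routine loose ends are the two compatibility-preservation checks flagged above; once those are in place the conclusion is immediate from the cited results.
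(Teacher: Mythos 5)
Your overall architecture (produce an internally compatible set containing $M$ and $1$, saturate via Lemma~\ref{maxcompat}, conclude by Theorem~\ref{maxcompatisblock}) is the right one, but the step you yourself flag as the main obstacle does in fact fail. You claim that $M\cup\{1\}$ is compatible with covers in $\overline{M\cup\{1\}}$, so that Proposition~\ref{intclosure} applies. This is false in general: the closure operation $N\mapsto\overline{N}$ only adjoins elements $x$ with $x\leq y,y'$ for some $y$ already present, together with the corresponding differences $y\ominus x$; it does \emph{not} adjoin the pieces of any orthogonal cover of $N$. Proposition~\ref{crucial} refines a given cover $C$ into $\overline{Ran(C)}$, the closure of the \emph{range of the cover}, which is a different set from $\overline{M}$ and need not be contained in it. A concrete counterexample: in the Boolean algebra $2^3$ with atoms $p,q,r$, take $M=\{p\vee q,\,q\vee r\}$. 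Here $\overline{M\cup\{1\}}=\{0,1,p\vee q,q\vee r\}$ (in an orthoalgebra the closure adds only $0$), and no orthogonal family with range in this four-element set covers both $p\vee q$ and $q\vee r$, since these two elements are not orthogonal to each other nor to $1$. So $\overline{M\cup\{1\}}$ is not even internally compatible, and the conclusion you want to extract from Proposition~\ref{intclosure} is simply not available.

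The repair is to adjoin the cover rather than take the closure, which is what the paper does: fix an orthogonal cover $C=(c_1,\ldots,c_n)$ of $M$, extend it to $C^+=(c_1,\ldots,c_n,(\bigoplus C)')$, and observe that $M\cup\{1\}\cup Ran(C^+)$ is internally compatible for trivial reasons --- $C^+$ itself is an orthogonal family lying inside the set, and every element of the set is a sum of a subfamily of $C^+$. No appeal to Proposition~\ref{intclosure} or to the closure operation is needed at this point; the remainder of your argument (Lemma~\ref{maxcompat} plus Theorem~\ref{maxcompatisblock}) then goes through verbatim.
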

\begin{proof}
Let $M_F$ be a finite compatible subset of $E$. Let 
$C=(c_1,\ldots,c_n)$ be an orthogonal
cover of $M_F$. Then $M_F\cup\{1\}$ is compatible set, with cover 
$C^+=(c_1,\ldots,c_n,(\bigoplus C)')$. Thus, $M_F\cup\{1\}\cup Ran(C^+)$ is an
internally compatible set containing $1$. Therefore, by Lemma \ref{maxcompat},
$M_F\cup\{1\}\cup Ran(C^+)$ can be embedded into a maximal compatible
subset $B$ with $1\in B$. By Theorem \ref{maxcompatisblock},
$B$ is a block.
\end{proof}

\begin{corollary}
\label{blockcover}
Let $E$ be a homogeneous effect algebra. Then
$$
E=\cup\{B:B\text{ is a block of }E\}\text{.}
$$
\end{corollary}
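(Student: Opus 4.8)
The plan is to reduce the statement entirely to Corollary \ref{embedfinite}. The inclusion $\bigcup\{B : B \text{ is a block of } E\} \subseteq E$ is immediate, since every block is by definition a sub-effect algebra of $E$ and hence a subset of $E$. All the content therefore lies in the reverse inclusion: every $x \in E$ must belong to some block.

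First I would fix an arbitrary $x \in E$ and observe that the singleton $\{x\}$ is a compatible subset of $E$. Indeed, the one-element orthogonal family $C = (x)$ has $Ran(C) = \{x\} \subseteq E$ and covers $\{x\}$, since $x = \bigoplus_{i \in \{1\}} c_i$; as $\{x\}$ is the only nonempty finite subset to test, $\{x\}$ is compatible with covers in $E$, i.e.\ $\{x\}$ is a finite compatible set.

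Next I would apply Corollary \ref{embedfinite} to the finite compatible set $\{x\}$. This yields a block $B$ with $\{x\} \subseteq B$, that is, $x \in B$. Since $x$ was arbitrary, we obtain $E \subseteq \bigcup\{B : B \text{ is a block of } E\}$, and combining this with the trivial inclusion gives the desired equality.

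The genuine work has already been carried out upstream: Corollary \ref{embedfinite}, resting on Lemma \ref{maxcompat} and Theorem \ref{maxcompatisblock}, is precisely the statement that finite compatible sets embed into blocks, so there is no real obstacle at this final step. The only point requiring a moment's care is the routine verification that a singleton qualifies as compatible in the technical sense of ``compatible with covers in $E$,'' as it is this that licenses the appeal to Corollary \ref{embedfinite}.
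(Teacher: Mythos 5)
Your proof is correct and follows exactly the paper's route: the paper's entire proof is ``By Corollary \ref{embedfinite}'', and you simply spell out the (correct) routine check that a singleton $\{x\}$ is a finite compatible set, so that Corollary \ref{embedfinite} applies to produce a block containing $x$.
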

\begin{proof}
By Corollary \ref{embedfinite}.
\end{proof}

\begin{corollary}
\label{bigcor}
For an effect algebra $E$, the following are equivalent.
\begin{enumerate}
\item[(a)]$E$ is homogeneous.
\item[(b)]Every finite compatible subset can be embedded into a block.
\item[(c)]Every finite compatible subset can be embedded into a sub-effect
	algebra of $E$ satisfying Riesz decomposition property.
\item[(d)]The range of every finite orthogonal family can be embedded into
	a block.
\item[(e)]The range of every finite orthogonal family can be embedded into 
	a sub-effect algebra satisfying Riesz decomposition property.
\item[(f)]The range of every orthogonal family with three elements can be 
	embedded into a block.
\item[(g)]The range of every orthogonal family with three elements can be 		embedded into a sub-effect algebra satisfying Riesz 
	decomposition property.
\end{enumerate}
\end{corollary}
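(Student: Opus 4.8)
The plan is to show that all seven conditions lie in a single cycle of implications, so that they are mutually equivalent. Concretely, I would establish the cycle $(a)\Rightarrow(b)\Rightarrow(d)\Rightarrow(f)\Rightarrow(g)\Rightarrow(a)$ together with the auxiliary implications $(b)\Rightarrow(c)\Rightarrow(e)\Rightarrow(g)$, which splice the remaining conditions (c) and (e) into the cycle. Tracing reachability confirms that this collection of arrows makes all seven statements mutually derivable. Only two of the implications carry genuine content; the rest are trivial weakenings.

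The trivial implications $(b)\Rightarrow(c)$, $(b)\Rightarrow(d)$, $(c)\Rightarrow(e)$, $(d)\Rightarrow(f)$, $(e)\Rightarrow(g)$ and $(f)\Rightarrow(g)$ all rest on two observations. First, a block is by definition a maximal sub-effect algebra satisfying the Riesz decomposition property, hence in particular a sub-effect algebra satisfying that property; so any assertion ``\dots\ can be embedded into a block'' at once yields the corresponding assertion with ``block'' replaced by ``sub-effect algebra satisfying Riesz decomposition property''. This gives $(b)\Rightarrow(c)$ and $(f)\Rightarrow(g)$. Second, the range $Ran(C)$ of a finite orthogonal family $C$ is itself a finite compatible set (with orthogonal cover $C$), and a three-element orthogonal family is a special finite orthogonal family; thus the hypotheses phrased for ranges of orthogonal families are weaker than those phrased for arbitrary finite compatible sets, and the three-element versions are weaker than the finite ones. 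This dispatches $(b)\Rightarrow(d)$, $(c)\Rightarrow(e)$, $(d)\Rightarrow(f)$ and $(e)\Rightarrow(g)$. The first substantial implication, $(a)\Rightarrow(b)$, is precisely Corollary \ref{embedfinite}, which I may invoke verbatim.

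The only new argument is $(g)\Rightarrow(a)$. I would start from arbitrary $u,v_1,v_2\in E$ with $v_1\perp v_2$, $u\leq v_1\oplus v_2$ and $u\leq(v_1\oplus v_2)'$, and make the key observation that $u\leq(v_1\oplus v_2)'$ says exactly that $u\oplus(v_1\oplus v_2)=u\oplus v_1\oplus v_2$ exists, so that $(u,v_1,v_2)$ is a three-element orthogonal family. Applying (g) to this family produces a sub-effect algebra $F$ satisfying the Riesz decomposition property with $\{u,v_1,v_2\}\subseteq F$. Since $F$ is closed under $\oplus$ we have $v_1\oplus v_2\in F$, and since $F$ is closed under $\ominus$ the witnessing element $(v_1\oplus v_2)\ominus u$ also lies in $F$, so the relation $u\leq v_1\oplus v_2$ persists inside $F$. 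Now the Riesz decomposition property in $F$ splits $u$ as $u=u_1\oplus u_2$ with $u_1\leq v_1$ and $u_2\leq v_2$, and these $u_1,u_2\in F\subseteq E$ are exactly the elements demanded by homogeneity.

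I expect no serious obstacle: the entire content of $(g)\Rightarrow(a)$ is the recognition that the defining hypotheses of homogeneity are nothing but the assertion that $(u,v_1,v_2)$ is orthogonal, after which the Riesz decomposition property inside the postulated sub-effect algebra does all the work. The one point that deserves an explicit line is the inheritance of the order $\leq$ from $E$ to the sub-effect algebra $F$, which follows from closure of $F$ under $\ominus$ as noted above; this guarantees that the decomposition obtained inside $F$ is simultaneously a decomposition in $E$.
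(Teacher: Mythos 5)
Your proposal is correct and follows essentially the same route as the paper: the identical chains of trivial implications $(b)\Rightarrow(c)\Rightarrow(e)\Rightarrow(g)$ and $(b)\Rightarrow(d)\Rightarrow(f)\Rightarrow(g)$, the invocation of Corollary \ref{embedfinite} for $(a)\Rightarrow(b)$, and the same closing argument $(g)\Rightarrow(a)$ via the observation that $(u,v_1,v_2)$ is a three-element orthogonal family whose embedding into a sub-effect algebra with the Riesz decomposition property yields the required splitting of $u$. Your explicit check that the relation $u\leq v_1\oplus v_2$ persists inside the sub-effect algebra (because the witness $(v_1\oplus v_2)\ominus u$ lies there) is a small point the paper leaves implicit, but it is not a different argument.
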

\begin{proof}
(a)$\implies$(b) is Corollary \ref{embedfinite}. The implication chains
(b)$\implies$(c)$\implies$(e)$\implies$(g) and
(b)$\implies$(d)$\implies$(f)$\implies$(g) are obvious. To prove that
(h)$\implies$(a), assume that $E$ is an effect algebra satisfying (g), and
let $u,v_1,v_2\in E$ be such that
$u\leq v_1\oplus v_2$, $u\leq(v_1\oplus v_2)'$. Then
$(u,v_1,v_2)$ is an orthogonal family with three elements. By (g), 
$\{u,v_1,v_2\}$ can be embedded into a sub-effect algebra $R$ satisfying Riesz 
decomposition property. Thus, there are $u_1,u_2\in R\subseteq E$ such that
$u_1\leq v_1$, $u_2\leq v_2$ and $u=u_1\oplus u_2$. Hence,
$E$ is homogeneous.
\end{proof}

\begin{question}
Can every compatible subset of a homogeneous effect
algebra $E$ be embedded into a block ? This is true for orthomodular posets
(cf. e.g. \cite{PtaPul:OSaQL}) and for lattice ordered effect algebras.
By Theorem \ref{maxcompatisblock} and Lemma \ref{maxcompat},
this question reduces to the
question, whether a compatible subset can be embedded into an internally
compatible subset containing $1$.
\end{question}

\section{Compatibility center and sharp elements}

For a homogeneous effect algebra $E$, we write
$$
K(E)=\bigcap\{B:\text{$B$ is a block of $E$}\}\text{.}
$$
We say that $K(E)$ is the {\em compatibility center} of $E$.
Note that $K(E)=\overline{K(E)}$ and hence, 
by Proposition \ref{subalg}, $K(E)$ is homogeneous.

An element $a$ of an effect algebra is called {\em sharp} iff
$a\land a'=0$. We denote the set of all sharp elements of an effect algebra
$E$ by $E_S$. It is obvious that an effect algebra $E$ is an 
orthoalgebra iff $E=E_S$.
An element $a$ of an effect algebra $E$ is called {\em principal} iff
the interval $[0,a]$ is closed with respect to $\oplus$. Evidently, 
every principal element in an effect algebra is sharp.
A principal element $a$ of an effect algebra is called {\em central}
iff for all $b\in E$ there is a unique decomposition $b=b_1\oplus b_2$
with $b_1\leq a$, $b_2\leq a'$. The set of all central elements
of an effect algebra $E$ is called {\em the center of $E$} and 
is denoted by $C(E)$. 
In \cite{GreFouPul:TCoaEA}, the center of an effect algebra was introduced
and the following properties of $C(E)$ were proved.

\begin{proposition}
Let $E$ be an effect algebra. Then
\begin{itemize}
\item $C(E)$ is a sub-effect algebra of $E$.
\item $C(E)$ is a Boolean algebra. Moreover, for all $a\in C(E)$ and
	$x\in E$,
	$a\land x$ exists.
\item For all $a\in C(E)$, the map $\phi:E\mapsto[0,a]_E$ given by
	$\phi(x)=a\land x$ is a morphism.
\item For all $a\in C(E)$, $E$ is naturally isomorphic to 
	$[0,a]_E\times[0,a']_E$. Moreover, for all effect algebras $E_1,E_2$
	such that there is an isomorphism $\phi:E\mapsto E_1\times E_2$,
	$\phi^{-1}(1,0)$ and $\phi^{-1}(0,1)$ are central in $E$.
\end{itemize}
\end{proposition}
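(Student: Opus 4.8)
The plan is to reduce all four assertions to a single structural lemma: for a central element $a$, the map $\psi_a\colon E\to[0,a]_E\times[0,a']_E$ given by $\psi_a(b)=(b_1,b_2)$, where $b=b_1\oplus b_2$ is the unique decomposition with $b_1\le a$, $b_2\le a'$, is an isomorphism, and moreover $b_1=a\land b$ and $b_2=a'\land b$. Once this is in hand, the third bullet is immediate, since $x\mapsto a\land x$ is just $\pi_1\circ\psi_a$, a composite of morphisms; the first half of the fourth bullet \emph{is} the lemma; and the existence of $a\land x$ asserted in the second bullet is part of the lemma as well.

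First I would establish the identity $b_1=a\land b$, which simultaneously yields existence of the meet. Clearly $b_1\le a$ and $b_1\le b$. For an arbitrary lower bound $c\le a,b$, decompose $b\ominus c=d_1\oplus d_2$ centrally; then $b=c\oplus d_1\oplus d_2$, and since $c,d_1\le a$ with $a$ principal we get $c\oplus d_1\le a$, while $d_2\le a'$. By uniqueness of the central decomposition, $c\oplus d_1=b_1$, so $c\le b_1$; hence $b_1=a\land b$. (Here I use only that a central element is principal, hence sharp.) Next I would check that $\psi_a$ is an isomorphism: it sends $1=a\oplus a'$ to the top $(a,a')$; it preserves $\oplus$ because, for orthogonal $b,c$, the parts satisfy $b_1\oplus c_1\le a$ and $b_2\oplus c_2\le a'$ again by principality, and uniqueness forces $\psi_a(b\oplus c)=\psi_a(b)\oplus\psi_a(c)$; it is injective by uniqueness and cancellativity; and it is surjective because any $(x,y)$ with $x\le a\le y'$ has $x\perp y$, so $\psi_a(x\oplus y)=(x,y)$. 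The inverse is a morphism by the same token. This settles the meet part of bullet two, all of bullet three, and the forward half of bullet four.

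For the converse half of the fourth bullet, let $\phi\colon E\to E_1\times E_2$ be an isomorphism and set $a=\phi^{-1}(1,0)$. Anything below $a$ has the form $\phi^{-1}(x,0)$, so $a$ is principal, and every $b$ splits as $\phi^{-1}(b_1,b_2)=\phi^{-1}(b_1,0)\oplus\phi^{-1}(0,b_2)$ with the summands below $a$ and $a'=\phi^{-1}(0,1)$ respectively; injectivity of $\phi$ gives uniqueness, so $a$ (and symmetrically $a'$) is central.

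It remains to treat the first bullet and the Boolean-algebra part of the second. That $1$ is central and that $C(E)$ is closed under ${}'$ are immediate from the definition, so the real work is closure under $\ominus$ together with distributivity. The device for both is that two central elements induce \emph{commuting} decompositions: applying $\psi_b$ inside each factor of $\psi_a$ exhibits an isomorphism $E\cong[0,a\land b]_E\times[0,a\land b']_E\times[0,a'\land b]_E\times[0,a'\land b']_E$, whose four corner elements are central by the converse just proved. Each of $a\land b$, $a\ominus b$ (when $b\le a$) and $a\oplus b$ (when $b\perp a$) is then a sum of corner elements, hence central, giving the sub-effect algebra claim; and the same four-fold product shows any two central elements are compatible and generate a Boolean subalgebra, so the lattice $C(E)$ is distributive. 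Since every central element is sharp, $C(E)$ is moreover an orthoalgebra, and a distributive lattice ordered orthoalgebra is a Boolean algebra. \emph{The main obstacle is exactly this ``commuting decompositions'' step}: verifying that $\psi_b$ restricts to the factors of $\psi_a$ and that the four resulting summands are genuinely central, not merely principal, is the one place where the product correspondence must be pushed through with care; everything else is bookkeeping with uniqueness and principality.
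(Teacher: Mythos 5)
First, a point of reference: the paper does not prove this proposition at all --- it is imported verbatim from \cite{GreFouPul:TCoaEA} (``the following properties of $C(E)$ were proved'' there), so there is no in-paper argument to compare yours against. Judged on its own, your architecture is the standard one and much of it is sound: the verification that $b_1=a\land b$, the surjectivity/injectivity of $\psi_a$, and the converse half of the fourth bullet (preimages of $(1,0)$ and $(0,1)$ are central) are all correct as written.

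There is, however, one recurring unjustified step: you repeatedly use that $a'$ is principal, and with the definition of ``central'' as stated in this paper ($a$ principal, plus existence and uniqueness of the decomposition $b=b_1\oplus b_2$ with $b_1\le a$, $b_2\le a'$) that is not given and is not ``immediate.'' Concretely, when you argue that $\psi_a$ preserves $\oplus$ you write that $b_2\oplus c_2\le a'$ ``again by principality'' --- but that is principality of $a'$, not of $a$; and your claim that closure of $C(E)$ under $'$ is immediate from the definition is exactly the assertion that $a'$ is principal (plus the symmetric decomposition, which is genuinely automatic). The missing statement is equivalent to: for $x,y\le a'$ with $x\perp y$, one has $a\land(x\oplus y)=0$, i.e.\ $a\oplus x\oplus y$ exists whenever $a,x,y$ are pairwise orthogonal with $x,y\le a'$. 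This is a local orthomodular-poset--type triangle condition, and it does not follow by the kind of bookkeeping you use elsewhere (every attempt to derive it from uniqueness runs in a circle through the very inequality $x\oplus y\le a'$ one is trying to prove). Note that its failure would break not only the first bullet but also the third and fourth: without it $\psi_a^{-1}$ is a bijective morphism but $\psi_a$ need not be one, and $x\mapsto a\land x$ need not be additive. The repair is either to prove this as a separate lemma or to work with the symmetric definition used in \cite{GreFouPul:TCoaEA}, under which both $a$ and $a'$ are required to be principal; with that in hand your argument goes through. The same symmetry issue resurfaces in your ``commuting decompositions'' step for closure of $C(E)$ under $\ominus$ and for distributivity, which you correctly flag as the delicate point but do not actually carry out (one needs $a\ominus(a\land b)=a\land b'$ and the centrality of $a\land b$ in $[0,a]_E$, both of which lean on the principality of $b'$).
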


A subset $I$ of an effect algebra $E$ is called an {\em ideal}
iff the following condition is satisfied : $a,b\in I$, $a\perp b$ is
equivalent to $a\oplus b\in I$. An ideal $I$ is called {\em Riesz ideal}
iff, for all $i,a,b$ such that $i\in I$, $a\perp b$ and
$i\leq a\oplus b$, there are $i_1,i_2$ such that $i_1\leq a$, $i_2\leq b$ and
$i\leq i_1\oplus i_2$. Riesz ideals were introduced in \cite{GudPul:QoPAM}.

For a lattice ordered effect algebra $E$, it was proved in \cite{Rie:CaCEiEA},
that $C(E)=K(E)\cap E_S$. Moreover, as proved in \cite{JenRie:OSEiLOEA},
for a lattice ordered effect algebra $E$, $E_S$ is a sublattice of $E$,
a sub-effect algebra of $E$, and every block of $E_S$ is the center of a
block of $E$. In the remainder of this section, we will
extend some of these results to the class of homogeneous effect algebras.

\begin{proposition}
\label{blockcenter}
Let $a$ be an element of a homogeneous effect algebra $E$.
The following are equivalent.
\begin{enumerate}
\item[(a)]$a\in E_S$.
\item[(b)]$a$ is central in every block of $E$ which contains $a$.
\item[(c)]$a$ is central in some block of $E$.
\end{enumerate}
\end{proposition}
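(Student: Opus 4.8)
The plan is to prove the cycle $(a)\Rightarrow(b)\Rightarrow(c)\Rightarrow(a)$. The engine of the argument is the auxiliary fact that \emph{in any effect algebra $R$ satisfying the Riesz decomposition property, every sharp element is central.} I would establish this first. Given $a\in R$ with $a\land a'=0$, existence of a decomposition $b=b_1\oplus b_2$ with $b_1\le a$, $b_2\le a'$ for each $b\in R$ is immediate from the Riesz decomposition property applied to $b\le 1=a\oplus a'$. For uniqueness, if $b_1\oplus b_2=c_1\oplus c_2$ with $b_1,c_1\le a$ and $b_2,c_2\le a'$, I would pass to the interpolation form of Riesz decomposition (which follows from the $n=2$ version, by decomposing $c_1\le b_1\oplus b_2$ and taking the complementary pieces) to obtain a refinement $d_{ij}$ with $b_1=d_{11}\oplus d_{12}$, $b_2=d_{21}\oplus d_{22}$, $c_1=d_{11}\oplus d_{21}$, $c_2=d_{12}\oplus d_{22}$; then $d_{12}\le b_1\le a$ and $d_{12}\le c_2\le a'$ force $d_{12}\le a\land a'=0$, and symmetrically $d_{21}=0$, whence $b_1=c_1$ and $b_2=c_2$. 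Principality of $a$ is obtained the same way: for $x,y\le a$ with $x\perp y$, write $x\oplus y=p\oplus q$ with $p\le a$, $q\le a'$, split $q=q_1\oplus q_2$ along $x,y$, and note $q_1,q_2\le a\land a'=0$, so $q=0$ and $x\oplus y=p\le a$. Thus sharp $\Rightarrow$ principal $\Rightarrow$ central in $R$.

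For $(a)\Rightarrow(b)$, let $a\in E_S$ and let $B$ be any block containing $a$. Since $B$ is a sub-effect algebra, $a'\in B$, and every common lower bound of $a$ and $a'$ in $B$ is also one in $E$, hence is $\le a\land a'=0$; therefore $a\land_B a'=0$, i.e.\ $a$ is sharp in $B$. A block satisfies the Riesz decomposition property by definition, so the auxiliary fact applies to $R=B$ and makes $a$ central in $B$. As $B$ was arbitrary, $(b)$ holds. The implication $(b)\Rightarrow(c)$ is then immediate: the singleton $\{a\}$ is a finite compatible set, so by Corollary~\ref{embedfinite} it embeds into some block $B$, and $(b)$ makes $a$ central in $B$.

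The interesting direction is $(c)\Rightarrow(a)$. Suppose $a$ is central in a block $B$. Being central, $a$ is principal and therefore sharp \emph{in} $B$, i.e.\ $a\land_B a'=0$. The key observation is that, by Theorem~\ref{maxcompatisblock}, $B$ is a maximal internally compatible set containing $1$, so Corollary~\ref{theone}(b) gives $B=\overline B$. Now take any $x\in E$ with $x\le a$ and $x\le a'$. Inspecting the closure~(\ref{closure}) with $y=a\in B=M_0$, the element $x$ (which satisfies $x\le a,a'$) lies in $M_1\subseteq\overline B=B$. Thus $x$ is a common lower bound of $a$ and $a'$ lying inside $B$, so $x\le a\land_B a'=0$ and $x=0$. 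Hence $a\land_E a'=0$ and $a\in E_S$.

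The main obstacle is exactly this last step. A priori a witness $x\le a,a'$ to non‑sharpness need not lie in the block $B$ in which $a$ is central, and a naive attempt to refine an orthogonal cover of $\{a,a'\}$ taken inside $B$ only introduces new elements outside $B$, never pulling $x$ in; so the argument cannot succeed by staying at the level of covers. The clean resolution is to recognize that the closure operator $M\mapsto\overline M$ was designed precisely so that common lower bounds of $y$ and $y'$ (for $y\in M$) are absorbed into $\overline M$, and that blocks are closed under it ($B=\overline B$). This absorbs $x$ into $B$ for free, after which sharpness of $a$ \emph{within} $B$ finishes the job. The other point demanding care is the auxiliary Riesz‑decomposition lemma, particularly the uniqueness half via the interpolation refinement; once that lemma is in hand, the three implications assemble without further difficulty.
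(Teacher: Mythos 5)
Your proof is correct, and its overall architecture --- the cycle $(a)\Rightarrow(b)\Rightarrow(c)\Rightarrow(a)$, with $(c)\Rightarrow(a)$ handled by noting that $B=\overline B$ (Theorem~\ref{maxcompatisblock} plus Corollary~\ref{theone}(b)) absorbs any common lower bound $x\le a,a'$ into $B$, where sharpness of the central element $a$ kills it --- is exactly the paper's. The genuine difference is in $(a)\Rightarrow(b)$. The paper first proves $a$ is principal in $B$ by using internal compatibility of $B$ to get $x_1\oplus x_2\compat a$, invoking a lemma of Chevalier and Pulmannov\'a to produce a decomposition $x_1\oplus x_2=y_1\oplus y_2$ with $y_1\le a$, $y_2\le a'$, and then splitting $y_2$ along $x_1,x_2$ by RDP; it then upgrades principality to centrality by observing that $[0,a]\cap B$ is an ideal, that every ideal in an effect algebra with RDP is a Riesz ideal, and citing the external characterization that $a$ is central iff $[0,a]$ is a Riesz ideal. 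You instead prove, self-containedly, that in any effect algebra with RDP every sharp element is central: existence of the decomposition from $b\le a\oplus a'$, uniqueness from the Riesz refinement matrix (which does follow from the $n=2$ form of RDP by decomposing $c_1\le b_1\oplus b_2$ and taking complements, as you say), and principality from a two-step RDP splitting. Your route avoids both citations and the detour through compatibility and Riesz ideals, and it delivers the paper's Corollary~\ref{rieszcenter} directly rather than as a consequence of this proposition; the paper's version is shorter on the page only because it outsources the work to the Chevalier--Pulmannov\'a preprint. All steps you sketch check out, including the refinement identity $c_2=(b_1\ominus d_{11})\oplus(b_2\ominus d_{21})$ and the observation that a singleton is a finite compatible set so that Corollary~\ref{embedfinite} applies in $(b)\Rightarrow(c)$.
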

\begin{proof}
(a) implies (b): Assume that $a\in E$ is sharp, let $B$ be a block of $E$ 
such that $a\in B$. Since $a$ is sharp in $E$, $a$ is sharp in $B$.
We will prove that $a$ is principal in $B$. Let $x_1,x_2\in B$ be such that 
$x_1,x_2\leq a$, $x_1\perp x_2$. Since $B$ is a sub-effect algebra of $E$,
$x_1\oplus x_2\in B$. Since $B$ is internally compatible, 
$x_1\oplus x_2\compat a$ in $B$. 
By \cite{ChePul:SILiPAM}, Lemma 2, $x_1\oplus x_2\compat a$ in $B$
implies that there are $y_1,y_2\in B$ such that $y_1\leq a$, $y_2\leq a'$
and $x_1\oplus x_2=y_1\oplus y_2$. Since $B$ satisfies Riesz decomposition
property, $y_2\leq x_1\oplus x_2$ implies that there are $t_1,t_2\in B$
such that $t_1\leq x_1$, $t_2\leq x_2$ and $y_2=t_1\oplus t_2$.
For $i\in\{1,2\}$, $t_i\leq a,a'$. Since $a$ is sharp in $B$,
this implies that $t_1=t_2=0$. Thus, $x_1\oplus x_2=y_1\leq a$ and
$a$ is principal in $B$ and hence $[0,a]\cap B$ is an ideal in $B$.
Since $B$ satisfies Riesz decomposition property, every ideal in $B$ is
a Riesz ideal. By \cite{ChePul:SILiPAM}, an element $a$ of an effect algebra
is central iff $[0,a]$ is a Riesz ideal. Therefore, $a$ is central in $B$.

(b) implies (c):
By Corollary \ref{blockcover}, every element of $E$ is in some block.

(c) implies (a):
Let $a\in C(B)$ for some block $B$, let $b\leq a,a'$. Since
$B=\overline B$, $b\in B$. Thus, $b=0$ and $a$ is sharp.
\end{proof}

\begin{corollary}
\label{rieszcenter}
Let $a$ be an element of an effect algebra $E$ 
satisfying Riesz decomposition property. The following are equivalent.
\begin{enumerate}
\item[(a)]$a\in E_S$.
\item[(b)]$a\in C(E)$.
\item[(c)]$a$ is principal.
\end{enumerate}
\end{corollary}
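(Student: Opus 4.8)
The plan is to reduce everything to Proposition \ref{blockcenter} by observing that an effect algebra satisfying the Riesz decomposition property is its own block. First I would invoke Theorem \ref{equiv}: since $E$ satisfies the Riesz decomposition property, $E$ is homogeneous (and compatible), so that Proposition \ref{blockcenter} is applicable to $E$. Next I would note that $E$ is trivially a sub-effect algebra of itself, that it satisfies the Riesz decomposition property by hypothesis, and that no sub-effect algebra of $E$ can properly contain $E$; hence $E$ is a maximal sub-effect algebra satisfying the Riesz decomposition property, i.e. $E$ is a block of $E$. (In fact, since every block is a subset of $E$, this $E$ is the unique block of $E$.) This identification is the crux of the argument.

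With this in hand the equivalences fall out as a short cycle. For (a)$\implies$(b), suppose $a\in E_S$. Since $E$ is homogeneous and $a$ lies in the block $E$, the implication (a)$\implies$(b) of Proposition \ref{blockcenter} shows that $a$ is central in every block containing $a$, in particular in $E$ itself, so $a\in C(E)$. For (b)$\implies$(c) I would simply recall that by the definition of centrality every central element is principal. For (c)$\implies$(a) I would recall the remark in the text that every principal element is sharp, that is, satisfies $a\land a'=0$. This closes the cycle (a)$\implies$(b)$\implies$(c)$\implies$(a) and establishes all three equivalences.

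I do not anticipate any genuine obstacle here: the substantive work has already been done in Proposition \ref{blockcenter}, and the only point requiring care is the identification of $E$ with its own block, which rests entirely on the maximality built into the definition of a block together with Theorem \ref{equiv}. Everything else is either a direct appeal to that proposition or an immediate consequence of the definitions of \emph{sharp}, \emph{principal}, and \emph{central}.
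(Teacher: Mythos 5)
Your proposal is correct and takes essentially the same route as the paper: both reduce the equivalence of (a) and (b) to Proposition \ref{blockcenter} via the observation that $E$, being homogeneous and satisfying the Riesz decomposition property, is its own block, and both close the cycle using that central elements are principal by definition and principal elements are sharp. The only cosmetic difference is that you invoke Theorem \ref{equiv} for homogeneity where the more immediate Proposition \ref{agoodclass}(b) would suffice, and you spell out the block identification that the paper leaves implicit.
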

\begin{proof}
By Proposition \ref{blockcenter}, (a) is equivalent to 
(b). In every effect algebra, all principal elements are sharp.
Every central element is principal.
\end{proof}

\begin{corollary}
For a homogeneous effect algebra $E$,
$E_S$ is a sub-effect algebra of $E$. Moreover, $E_S$ is an orthoalgebra.
\end{corollary}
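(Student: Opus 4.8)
The plan is to verify the two defining closure conditions for a sub-effect algebra directly, using the identification of sharp elements with block-central elements supplied by Proposition \ref{blockcenter}. Recall that a subset $E_0\subseteq E$ is a sub-effect algebra precisely when $1\in E_0$ and $a\ominus b\in E_0$ whenever $a,b\in E_0$ with $a\geq b$. The first condition is immediate, since $1\land 1'=1\land 0=0$, so $1\in E_S$. The substance of the argument therefore lies in showing that $E_S$ is closed under $\ominus$.

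For the closure under $\ominus$, let $a,b\in E_S$ with $a\geq b$. Since $b\leq a$, the pair $\{a,b\}$ is compatible (comparable elements are always compatible, e.g.\ witnessed by $a_1=a\ominus b$, $b_1=0$, $c=b$), so by Corollary \ref{embedfinite} it embeds into a block $B$; thus $a,b\in B$. Because $a$ and $b$ are sharp in $E$, the implication (a)$\implies$(b) of Proposition \ref{blockcenter} shows that both are central in $B$, that is, $a,b\in C(B)$. Now $C(B)$ is a sub-effect algebra of $B$ (indeed a Boolean algebra), hence closed under $\ominus$; since $a\geq b$, we obtain $a\ominus b\in C(B)$. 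In particular $a\ominus b$ is central in the block $B$, so the implication (c)$\implies$(a) of Proposition \ref{blockcenter} yields that $a\ominus b$ is sharp, i.e.\ $a\ominus b\in E_S$. This establishes that $E_S$ is a sub-effect algebra.

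It then remains to check the orthoalgebra condition $x\perp x\implies x=0$ inside $E_S$. Suppose $a\in E_S$ and $a\perp a$. Then $a\leq a'$, so $a$ is a common lower bound of $a$ and $a'$, whence $a\leq a\land a'=0$ by sharpness; thus $a=0$. Since every element of $E_S$ is sharp, this shows $E_S$ is an orthoalgebra.

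Most of the real work has already been absorbed into Proposition \ref{blockcenter}, so I do not expect a serious obstacle. The only genuine step is the observation that two sharp elements with $a\geq b$ sit inside a common block and are both central there; once they live inside the Boolean center $C(B)$, closure under $\ominus$ is automatic. The one delicate point is to route the difference $a\ominus b$ through the center of a \emph{single} block, rather than attempting to argue its sharpness in $E$ directly, and this is exactly what the embedding of $\{a,b\}$ into one block secures.
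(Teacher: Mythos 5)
Your proof is correct and follows essentially the same route as the paper: embed the relevant pair of sharp elements into a block via Corollary \ref{embedfinite}, pass to the Boolean center of that block using both directions of Proposition \ref{blockcenter}, and conclude. The only cosmetic difference is that you verify closure under $\ominus$ directly (matching the paper's definition of sub-effect algebra) where the paper verifies closure under $\oplus$ and $'$; these are interchangeable.
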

\begin{proof}
Obviously, $0,1\in E_S$ and $E_S$ is closed with respect to $~'$. Assume $a,b\in E_S$,
$a\perp b$. Then $\{a,b\}$ is a finite compatible set.
Thus, by Corollary \ref{embedfinite},
$\{a,b\}$ can be embedded into a block $B$. 
By Proposition \ref{blockcenter}, $a,b\in C(B)$.
Since $C(B)$ is a sub-effect algebra of $B$, $a\oplus b\in C(B)$. 
By Proposition \ref{blockcenter}, $C(B)\subseteq E_S$, thus $a\oplus b\in E_S$.

Obviously, $E_S$ is an orthoalgebra.
\end{proof}

Since, for a homogeneous effect algebra $E$,
$E_S$ is an orthoalgebra, every compatible subset of $E_S$
can be embedded into a block of $E_S$, which is a Boolean algebra.

\begin{proposition}
Let $E$ be a homogeneous effect algebra. For every block 
$B^0$ in $E_S$ and for every block $B$ of $E$ such that 
$B^0\subseteq B$, $B^0=C(B)$.
\end{proposition}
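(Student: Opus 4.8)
The plan is to exhibit both $B^0$ and $C(B)$ as sub-effect algebras of the orthoalgebra $E_S$ which satisfy the Riesz decomposition property, and then to conclude by the maximality built into the definition of the block $B^0$. The starting observation is that, since $B$ is a block of $E$, Theorem \ref{equiv} guarantees that $B$ satisfies the Riesz decomposition property; hence Corollary \ref{rieszcenter} applies to $B$ and identifies its sharp elements with its central elements, i.e. $B_S=C(B)$. This is the lever that converts statements about sharpness into statements about centrality.

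First I would prove the inclusion $B^0\subseteq C(B)$. Take $a\in B^0$. Since $B^0\subseteq E_S$, the element $a$ is sharp in $E$. Because $0\in B$ and every element of $B$ that is a lower bound of $\{a,a'\}$ is in particular such a lower bound in $E$, sharpness transfers downward: $a$ is sharp in $B$. As $B$ satisfies the Riesz decomposition property, Corollary \ref{rieszcenter} then gives $a\in C(B)$, whence $B^0\subseteq C(B)$.

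Next I would verify that $C(B)$ is itself a sub-effect algebra of $E_S$ satisfying the Riesz decomposition property. By Proposition \ref{blockcenter}, the implication (c)$\implies$(a), every element central in the block $B$ is sharp in $E$, so $C(B)\subseteq E_S$. The center of any effect algebra is a sub-effect algebra, so $C(B)$ contains $1$ and is closed under $\ominus$; since these operations are computed inside $E$ and $C(B)\subseteq E_S$, it follows that $C(B)$ is a sub-effect algebra of $E_S$. Finally $C(B)$ is a Boolean algebra, and a Boolean algebra satisfies the Riesz decomposition property.

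The conclusion is then immediate: $C(B)$ is a sub-effect algebra of $E_S$ satisfying the Riesz decomposition property and containing the block $B^0$, so the maximality of $B^0$ forces $C(B)=B^0$. The two delicate points, rather than any single hard obstacle, are the downward transfer of sharpness from $E$ to $B$ and the verification that $C(B)$ is a sub-effect algebra of $E_S$ (and not merely of $E$); both become routine once one observes that every meet, orthocomplement, and partial sum involved is inherited from the ambient algebra $E$.
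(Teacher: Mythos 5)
Your proposal is correct and follows essentially the same route as the paper: the paper gets $B^0\subseteq C(B)$ directly from Proposition \ref{blockcenter} (where you detour through the downward transfer of sharpness and Corollary \ref{rieszcenter}, which is itself a consequence of that proposition), and then concludes exactly as you do, from the maximality of $B^0$ together with the fact that $C(B)$ is a Boolean subalgebra of $E_S$. The extra verifications you supply (that $C(B)\subseteq E_S$ and that it is a sub-effect algebra of $E_S$ satisfying Riesz decomposition) are the details the paper leaves implicit.
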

\begin{proof}
Let $B^0$ be a block of $E_S$. Let $B$ be a block of $E$ with $B^0\subseteq B$.
By Proposition \ref{blockcenter}, $B^0\subseteq C(B)$.
Since $B^0$ is a block of $E_S$ and $C(B)$ is a Boolean algebra,
$B^0\subseteq C(B)$ implies that $B^0=C(B)$.
\end{proof}

\begin{question}
Let $B$ be a block of a homogeneous effect algebra $E$.
Is it true that $C(B)$ is a block of $E_S$?
\end{question}
 
\begin{proposition}
In a homogeneous effect algebra, $C(E)=C(K(E))=K(E)_S$.
\end{proposition}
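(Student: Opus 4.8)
The plan is to split the two equalities and to route everything through the effect algebra $K(E)$, which is homogeneous since $K(E)=\overline{K(E)}$ (Proposition \ref{subalg}). The first goal is to prove that $K(E)$ itself satisfies the Riesz decomposition property. By Theorem \ref{equiv} this amounts to showing that $K(E)$ is compatible, i.e.\ internally compatible as an effect algebra. Once this is done, Corollary \ref{rieszcenter}, applied to the effect algebra $K(E)$, immediately yields $C(K(E))=K(E)_S$, which is the second of the two equalities. It then remains to identify this common set with $C(E)$.

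For the compatibility of $K(E)$ I would argue as follows. Let $u,v_1,v_2\in K(E)$ with $v_1\perp v_2$ and $u\le v_1\oplus v_2$. Since every element of $K(E)$ lies in every block and each block is a compatible effect algebra, the set $\{u,v_1,v_2,v_1\oplus v_2\}$ is compatible in $E$; running the argument of the proof of Theorem \ref{equiv}, (b)$\Rightarrow$(a), via Proposition \ref{crucial}, produces $u_1\le v_1$, $u_2\le v_2$ with $u=u_1\oplus u_2$. The decisive point is that Proposition \ref{crucial} keeps the refining elements inside the closure of the range of the starting cover, so the task reduces to exhibiting a cover of $\{u,v_1,v_2,v_1\oplus v_2\}$ whose range lies in $\overline{\{u,v_1,v_2\}}$; then $u_1,u_2\in\overline{\overline{\{u,v_1,v_2\}}}=\overline{\{u,v_1,v_2\}}\subseteq\overline{K(E)}=K(E)$, so $K(E)$ has the decomposition and hence, being homogeneous, satisfies the Riesz decomposition property.

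For the equality $C(E)=K(E)_S$ I would first record that, for every block $B$, Proposition \ref{blockcenter} gives $C(B)=E_S\cap B$, whence $\bigcap_B C(B)=E_S\cap K(E)$. The inclusion $C(E)\subseteq K(E)$ is obtained by showing that a central element $a$ can be adjoined to any block: using that $x\mapsto x\wedge a$ is a morphism and that $a$ is principal, one refines a unit-covering family of a finite subset of a block $B$ into its $a$- and $a'$-parts, so that $B\cup\{a\}$ is internally compatible; maximality of $B$ (Theorem \ref{maxcompatisblock}) forces $a\in B$, hence $a\in K(E)$. Since central elements are sharp, $C(E)\subseteq E_S\cap K(E)$, and the reverse inclusion follows from the Riesz-ideal characterization of centrality of \cite{ChePul:SILiPAM} together with Proposition \ref{blockcenter}: an element $a$ of $E_S\cap K(E)$ is central in every block, and, splitting along $a$ inside a block and using $i\le a$ to annihilate the $a'$-component of a lower bound $i$, one checks that $[0,a]$ is a Riesz ideal of $E$. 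Thus $C(E)=E_S\cap K(E)$, and since sharpness in $E$ trivially implies sharpness in $K(E)$ while the converse follows from $C(K(E))=K(E)_S$ and the centrality-in-every-block argument, we get $E_S\cap K(E)=K(E)_S$.

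The main obstacle is the control of covers and decompositions so that they remain inside $K(E)$ rather than merely inside a single block: both the Riesz decomposition property of $K(E)$ and the implication ``sharp in $K(E)$ $\Rightarrow$ sharp in $E$'' rest on the closure identity $K(E)=\overline{K(E)}$ and on squeezing the refinement machinery of Propositions \ref{crucial} and \ref{intclosure} into the closure of the three given elements. I expect the step producing a cover with range in $\overline{\{u,v_1,v_2\}}$ to be the technical heart of the argument.
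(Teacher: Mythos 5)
Your first two paragraphs rest on a claim that is not established and is in fact strictly stronger than the proposition: that $K(E)$ satisfies the Riesz decomposition property. The paper poses exactly this as an open question immediately after the proposition, and your own sketch locates the unproved step precisely --- you need an orthogonal cover of $\{u,v_1,v_2\}$ whose range lies in $\overline{\{u,v_1,v_2\}}$, and nothing in the paper produces such a cover. Proposition \ref{crucial} only controls refinements of a cover you already have: it keeps $Ran(W)$ inside $\overline{Ran(C_0)}$, but the initial cover $C_0$ is merely guaranteed to exist somewhere in a block, and its range need not lie anywhere near $\overline{\{u,v_1,v_2\}}$, let alone in $K(E)$. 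So the route ``$K(E)$ has RDP, hence $C(K(E))=K(E)_S$ by Corollary \ref{rieszcenter}'' is a genuine gap. There is a secondary circularity: you later justify ``sharp in $K(E)$ implies sharp in $E$'' by appealing to $C(K(E))=K(E)_S$, i.e.\ to the gapped claim. The correct justification is one line and uses only $K(E)=\overline{K(E)}$: if $a\in K(E)$ and $b\leq a,a'$, then $b\in\overline{K(E)}=K(E)$, so sharpness of $a$ in $K(E)$ and in $E$ coincide.

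The good news is that your third paragraph already contains essentially the paper's proof, and the proposition needs nothing more. The paper argues: the inclusions $C(E)\subseteq C(K(E))\subseteq K(E)_S$ are straightforward (a central element lies in every block, hence in $K(E)$, and central elements are sharp), and the content is $K(E)_S\subseteq C(E)$, proved by showing $[0,a]$ is a Riesz ideal for $a\in K(E)_S$ --- first principality of $a$ (embed $x_1,x_2\leq a$ with $x_1\perp x_2$ into a block $B$; $a\in B$ since $a\in K(E)$, and $a$ is central in $B$ by Proposition \ref{blockcenter}), then the Riesz condition via $(x\oplus y)\wedge a=(x\wedge a)\oplus(y\wedge a)$ in a block containing $\{a,x,y\}$ --- and then the cited lemma of \cite{ChePul:SILiPAM} gives $a\in C(E)$. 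This is exactly your ``reverse inclusion'' argument. Once you have the cyclic chain $C(E)\subseteq C(K(E))\subseteq K(E)_S\subseteq C(E)$ (with the one-line closure argument above replacing the appeal to $C(K(E))=K(E)_S$), all three sets coincide and the middle equality comes for free; the entire RDP detour should be deleted.
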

\begin{proof}
It is evident that $C(E)\subseteq C(K(E))\subseteq K(E)_S$.
Let $a\in K(E)_S$. We shall prove that $[0,a]$ is a Riesz ideal.
By Lemma 2 of \cite{ChePul:SILiPAM}, this implies that $a\in C(E)$.
Suppose $x_1,x_2\leq a$, $x_1\perp x_2$. Then $\{x_1,x_2\}$ can be embedded
into a block $B$ of $E$. Since $a\in K(E)$, $a\in B$. Since $a$ is sharp,
$a$ is central in $B$. Thus, $a$ is principal in $B$ and 
hence $x_1\oplus x_2\leq a$. Therefore, $a$ is principal in $E$.
Let $i\in[0,a]$, $x\perp y$, $i\leq x\oplus y$. Similarly as above,
$\{a,x,y\}$ can be embedded into a block $B$ of $E$, such that
$a\in C(B)$. Obviously, $i\leq (x\oplus y)\land a$ and, since $a$ is
central in $B$, $(x\oplus y)\land a=(x\land a)\oplus(y\land a)$.
Thus, $[0,a]$ is a Riesz ideal. 
\end{proof}

\begin{question}
Let $E$ be a homogeneous effect algebra. Does $K(E)$
satisfy Riesz decomposition property ? This is true for orthoalgebras and
for lattice ordered effect algebras.
\end{question}

\section{Examples and counterexamples}

It is easy to check, that a direct product of a finite number of homogeneous
effect algebras is a homogeneous effect algebra.
\begin{example}
Let $E_1$ be an orthoalgebra.
Let $E_2$ be an effect algebra satisfying Riesz 
decomposition property, which is not an orthoalgebra.
If any of $E_1,E_2$ is not lattice ordered, then $E_1\times E_2$
is an example of a homogeneous effect algebra which is not lattice ordered.
Moreover, since $E_2$ is not an orthoalgebra, $E_1\times E_2$ is not an
orthoalgebra.
\end{example}

Another possibility to construct new homogeneous effect algebras from old
is to make {\em horizontal sums} (sometimes called {\em $0,1$-pastings)},
which means simply identifying the zeroes and ones of the summands.

As shown in the next example, it is possible to construct a lattice ordered
(and hence homogeneous) effect algebra by pasting of two MV-algebras
in a central element.

\begin{figure}
\psfrag{a}{$a$}
\psfrag{b}{$b$}
\psfrag{c}{$c$}
\psfrag{d}{$d$}
\psfrag{e}{$e$}
\includegraphics{box.eps}
\caption{~}
\label{box}
\end{figure}

\begin{figure}
\psfrag{0}{$0$}
\psfrag{1}{$1$}
\psfrag{a}{$a$}
\psfrag{g}{$a$}
\psfrag{b}{$b$}
\psfrag{c}{$c$}
\psfrag{d}{$d$}
\psfrag{e}{$e$}
\psfrag{a'}{$a'$}
\psfrag{g'}{$a'$}
\psfrag{b'}{$b'$}
\psfrag{c'}{$c'$}
\psfrag{d'}{$d'$}
\psfrag{e'}{$e'$}
\psfrag{c+c}{$c\oplus c$}
\psfrag{a+c}{$a\oplus c$}
\psfrag{b+c}{$b\oplus c$}
\psfrag{c+d}{$c\oplus d$}
\psfrag{c+e}{$c\oplus e$}
\psfrag{(c+c)'}{$(c\oplus c)'$}
\includegraphics{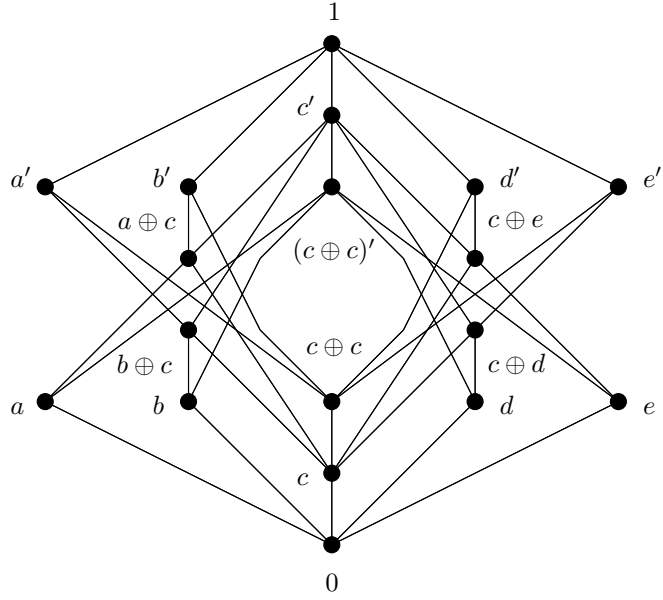}
\caption{An eighteen elements lattice ordered effect algebra}
\label{L18}
\end{figure}

\begin{example}
We borrowed the basic idea for this example from Cohen \cite{Coh:AItHSaQL}.
Consider a system consisting of a firefly in a box pictured in a
Figure \ref{box}. The box has five windows, separated by thin lines.
We shall consider two experiments on this system :

\begin{enumerate}
\item[(A)] Look at the windows $a,b,c$.
\item[(B)] Look at the windows $c,d,e$.
\end{enumerate}

Suppose that the window $c$ is covered with a grey filter. Unless the
firefly is shining very brightly at the moment we are performing the experiment,
we cannot be sure that we see the firefly in the $c$ window.
The outcomes of experiment (A) are
 
\medskip
\begin{tabular}{rl}
$(a)$&\mbox{We see the firefly in window $a$.}\\
$(b)$&\mbox{We see the firefly in window $b$.}\\
$(c)$&\mbox{We see the firefly in window $c$, with the level 
	of (un)certainity $\frac{1}{2}$.}\\
$(c\oplus c)$&\mbox{We see the firefly in window $c$.}
\end{tabular}
\medskip

The outcomes of (B) are similar. The unsharp quantum logic of our
experiment is an eighteen elements lattice ordered effect algebra $E$ with 
five atoms $a,b,c,d,e$, satisfying
$$
a\oplus b\oplus c\oplus c=c\oplus c\oplus d\oplus e=1\text{.}
$$
The Hasse diagram of $E$ is given by Figure \ref{L18}.
This effect algebra is constructed by pasting of two MV-algebras
$$
A=\{0,a,b,c,a\oplus c,b\oplus c,c\oplus c,a',b',(c\oplus c)',c',1\}
$$
and
$$
B=\{0,c,d,e,c\oplus c,c\oplus d,c\oplus e,d',e',(c\oplus c)',c',1\}\text{.}
$$
$A$ and $B$ are then blocks of $E$.
The compatibility center of $E$ is the MV-algebra
$$
K(E)=\{0,c,c\oplus c,(c\oplus c)',c',1\}
$$
and the center of $E$ is $\{0,c\oplus c,(c\oplus c)',1\}$.
$E_S$ forms a twelve-elements orthomodular lattice with two
blocks; each of them is isomorphic to the Boolean algebra $2^3$ and they are
pasted in one of their atoms (namely $c\oplus c$).
\end{example}

\begin{example}
Let $E$ be an eighteen elements effect algebra with six atoms
$a,b,c,d,e,f$, satisfying
\begin{equation}
\label{inmind}
a\oplus b\oplus c=c\oplus d\oplus d\oplus e=e\oplus f\oplus a=1\text{.}
\end{equation}
The Hasse diagram of $E$ is given by Figure \ref{gen}.
\begin{figure}
\psfrag{0}{$0$}
\psfrag{1}{$1$}
\psfrag{a}{$a$}
\psfrag{g}{$a$}
\psfrag{b}{$b$}
\psfrag{c}{$c$}
\psfrag{d}{$d$}
\psfrag{e}{$e$}
\psfrag{f}{$f$}
\psfrag{a'}{$a'$}
\psfrag{g'}{$a'$}
\psfrag{b'}{$b'$}
\psfrag{c'}{$c'$}
\psfrag{d'}{$d'$}
\psfrag{e'}{$e'$}
\psfrag{f'}{$f'$}
\psfrag{d+d}{$d\oplus d$}
\psfrag{d+e}{$d\oplus e$}
\psfrag{c+d}{$c\oplus d$}
\psfrag{(d+d)'}{$(d\oplus d)'$}
\includegraphics{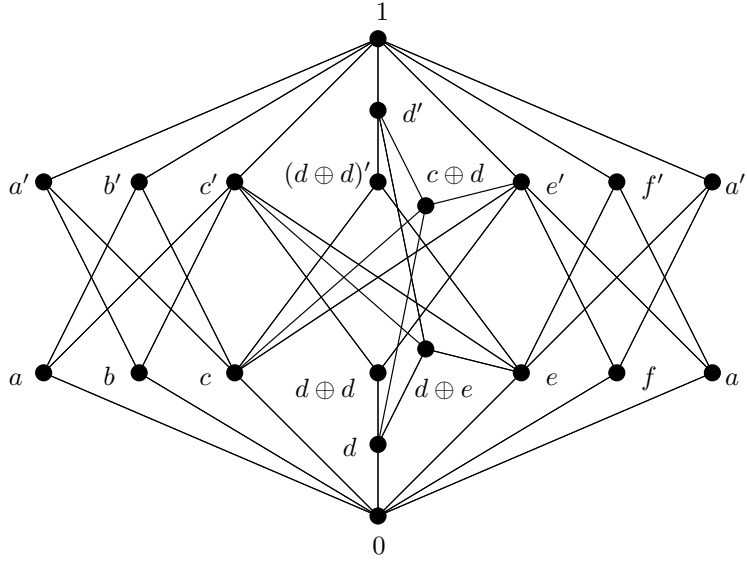}
\caption {A non-lattice ordered homogeneous effect algebra}
\label{gen}
\end{figure}
This effect algebra is constructed by pasting of three blocks :
two Boolean algebras
\begin{eqnarray*}
B_1&=&\{0,a,b,c,a',b',c',1\}\\
B_2&=&\{0,e,f,a,e',f',a',1\}
\end{eqnarray*}
and an MV-algebra
$$
B_3=\{0,c,d,e,d\oplus d,d\oplus e,c\oplus d,(d\oplus d)',c',d',e',1\}\text{.}
$$
By (\ref{inmind}), it is easy to see that the range of every 
orthogonal family with three elements can be embedded into a block. Thus, 
by Corollary \ref{bigcor}, $E$ is homogeneous.
All elements except for $d,d',c\oplus d,d\oplus e$ are sharp and
$E_S$ is an orthoalgebra with fourteen elements, called the 
{\em Wright triangle}, which is not an orthomodular poset.
\end{example}

\begin{proposition}
\label{nofunctions}
Let $E$ be a homogeneous effect algebra. Assume that there is an element
$a\in E$ with $a\leq a'$, such that $E$ is isomorphic to $[0,a]_E$.
Then $E$ satisfies Riesz decomposition property.
\end{proposition}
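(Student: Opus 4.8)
The plan is to verify the Riesz decomposition property directly by transporting it across the given isomorphism into $[0,a]_E$, where the hypothesis $a\leq a'$ converts the Riesz decomposition condition into the weaker homogeneity condition of Definition \ref{homogeneous}. As already observed in the text, it is enough to treat $n=2$, so I would start from arbitrary $u,v_1,v_2\in E$ with $v_1\perp v_2$ and $u\leq v_1\oplus v_2$, and aim to produce $u_1\leq v_1$, $u_2\leq v_2$ with $u=u_1\oplus u_2$.

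First I would fix an isomorphism $\phi\colon E\to[0,a]_E$ and record its basic properties: it carries $1$ to the unit $a$ of $[0,a]_E$, so $\phi(1)=a$, and both $\phi$ and $\phi^{-1}$ are morphisms, hence preserve $\leq$ and $\perp$. Pushing the data forward gives $\phi(v_1)\perp\phi(v_2)$ and $\phi(u)\leq\phi(v_1)\oplus\phi(v_2)$, with every element now lying in $[0,a]$.

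The crux is the claim that $w:=\phi(v_1\oplus v_2)=\phi(v_1)\oplus\phi(v_2)$ satisfies $w\leq w'$. This is where the hypotheses combine: $w\leq\phi(1)=a$ and, by assumption, $a\leq a'$, so $a\oplus a$ exists; writing $a=w\oplus(a\ominus w)$ and using $w\leq a\leq a'$ (so that $w\oplus a$ exists) then exhibits $w\oplus w$ as a partial sum of the existing sum $w\oplus a=w\oplus w\oplus(a\ominus w)$, whence $w\perp w$, i.e.\ $w\leq w'$. The point is that below $a$ everything can be ``doubled'', which is precisely the extra hypothesis separating homogeneity from the full Riesz decomposition property. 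With this in hand, $\phi(u)\leq w$ and $\phi(u)\leq w'$, so homogeneity of $E$ applies and yields $u_1\leq\phi(v_1)$, $u_2\leq\phi(v_2)$ with $\phi(u)=u_1\oplus u_2$.

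Finally, since $u_1,u_2\leq a$ lie in the domain of $\phi^{-1}$, I would apply the morphism $\phi^{-1}$ to pull the decomposition back to $E$, obtaining $\phi^{-1}(u_1)\leq v_1$, $\phi^{-1}(u_2)\leq v_2$ and $u=\phi^{-1}(u_1)\oplus\phi^{-1}(u_2)$, as required. I expect the only genuine obstacle to be the middle claim $w\leq w'$; once the self-similarity $E\cong[0,a]_E$ is exploited to place all relevant elements below $a$, the remaining steps are routine transport of structure along $\phi$ and $\phi^{-1}$.
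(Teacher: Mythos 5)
Your proof is correct, but it takes a different route from the paper's. The paper argues via the block machinery of Section~3: it picks a block $B$ containing $a$, notes that $B=\overline{B}$ by Corollary~\ref{theone}(b), deduces from $a\leq a'$ that $[0,a]=\{x:x\leq a,a'\}\subseteq B$, and concludes that $[0,a]_E$ inherits the Riesz decomposition property from $B$ before transporting it back along the isomorphism. You instead bypass blocks entirely: you transport a given instance $u\leq v_1\oplus v_2$ of the Riesz decomposition problem into $[0,a]_E$, observe that there the extra hypothesis $\phi(u)\leq(\phi(v_1)\oplus\phi(v_2))'$ of Definition~\ref{homogeneous} is automatic (since $w\leq a\leq a'\leq w'$ for any $w\leq a$ --- your doubling argument via $w\oplus a$ works, though this one-line chain of inequalities is even shorter), apply homogeneity of $E$ directly, and pull the decomposition back with $\phi^{-1}$. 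Your version is more self-contained, isolating the clean observation that in a homogeneous effect algebra any interval $[0,a]$ with $a\leq a'$ satisfies the Riesz decomposition property, and it does not invoke the existence of blocks (Theorem~\ref{maxcompatisblock} and Zorn's lemma); the paper's version is shorter given the machinery already established and fits the paper's block-theoretic theme. The routine transport steps you perform (orders and partial sums on $[0,a]_E$ agreeing with those of $E$, $u_1,u_2\leq a$ lying in the domain of $\phi^{-1}$) all check out.
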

\begin{proof}
Let $B$ be a block containing $a$.
Since $B$ is a maximal internally compatible subset of $E$,
Corollary \ref{theone}(b) implies that 
$[0,a]=\{x\in E:x\leq a,a'\}\subseteq B$.
This implies that $[0,a]_E$ satisfies Riesz decomposition property.
Therefore, $E$ satisfies Riesz decomposition property.
\end{proof}

\begin{corollary}
For a Hilbert space $\mathbb H$,
$\mathcal E(\mathbb H)$ is homogeneous iff $\DIMEN(\mathbb H)\leq 1$. 
\end{corollary}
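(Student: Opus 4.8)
The plan is to prove both implications, with the whole weight of the nontrivial direction carried by Proposition \ref{nofunctions}. First I dispose of the easy implication. If $\DIMEN(\mathbb H)=0$ then $\mathcal E(\mathbb H)=\{0\}$ is the trivial effect algebra, which is homogeneous. If $\DIMEN(\mathbb H)=1$, then every effect is a scalar $\lambda I$ with $\lambda\in[0,1]$, so $\mathcal E(\mathbb H)$ is (isomorphic to) the standard MV-algebra on $[0,1]$, a chain satisfying Riesz decomposition property; homogeneity then follows from Proposition \ref{agoodclass}(b) (or (c)).

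For the converse I argue the contrapositive: assuming $\DIMEN(\mathbb H)>1$, I show $\mathcal E(\mathbb H)$ is not homogeneous by contradiction. The decisive structural fact is that $\mathcal E(\mathbb H)$ is \emph{self-similar}. Set $a=\tfrac12 I$. Then $a'=I-\tfrac12 I=\tfrac12 I=a$, so in particular $a\leq a'$, and the scaling map $\phi(A)=\tfrac12 A$ is an isomorphism from $\mathcal E(\mathbb H)$ onto $[0,a]_{\mathcal E(\mathbb H)}$: it is a bijection with inverse $B\mapsto 2B$, it sends $I$ to the unit $a$ of the interval, and since $A+B\leq I$ is equivalent to $\tfrac12 A+\tfrac12 B\leq\tfrac12 I$ it preserves orthogonality and $\oplus$. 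Hence the hypotheses of Proposition \ref{nofunctions} are satisfied, so \emph{if} $\mathcal E(\mathbb H)$ were homogeneous, it would satisfy Riesz decomposition property.

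It remains to show $\mathcal E(\mathbb H)$ does \emph{not} satisfy Riesz decomposition property when $\DIMEN(\mathbb H)>1$, which completes the contradiction. Here I invoke Corollary \ref{rieszcenter}: in any effect algebra with Riesz decomposition property the sharp elements coincide with the central elements. I then exhibit a sharp element that is not central. Fix a projection $P$ with $0\neq P\neq I$, which exists because $\DIMEN(\mathbb H)\geq 2$. A short computation shows $P$ is sharp: any effect $B$ with $B\leq P$ and $B\leq P'=I-P$ satisfies $\langle Bx,x\rangle=0$ on $\mathrm{ran}(P)$ and on $\mathrm{ran}(P')$, forcing $B=0$, so $P\wedge P'=0$. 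On the other hand $P$ is not central: a central $P$ would give, for every effect $A$, a decomposition $A=A_1\oplus A_2$ with $A_1\leq P$ and $A_2\leq P'$, which forces $A$ to commute with $P$; but for $\DIMEN(\mathbb H)>1$ one can choose a rank-one effect $\tfrac12 Q$, with $Q$ a rank-one projection not commuting with $P$, that does not commute with $P$. Thus $P$ is sharp but not central, contradicting Corollary \ref{rieszcenter}. Therefore $\mathcal E(\mathbb H)$ has no Riesz decomposition property and, by the previous paragraph, is not homogeneous.

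The main obstacle is the last paragraph, the only place requiring genuine operator-theoretic input; everything upstream is formal manipulation of the effect-algebra structure. I expect the cleanest route is through Corollary \ref{rieszcenter} (sharp $=$ central under Riesz decomposition property) rather than producing an explicit triple $u\leq v_1\oplus v_2$, $u\leq(v_1\oplus v_2)'$ admitting no refinement, because the sharp/central dichotomy isolates precisely the noncommutativity that $\DIMEN(\mathbb H)>1$ supplies, whereas a direct homogeneity violation would entangle me in explicit matrix inequalities.
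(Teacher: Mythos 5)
Your proof is correct and its core is exactly the paper's argument: the self-similarity $\phi(A)=\tfrac12 A$ onto $[0,\tfrac12 I]$ together with Proposition \ref{nofunctions} reduces homogeneity to the Riesz decomposition property. The only difference is that where the paper simply cites as well known that $\mathcal E(\mathbb H)$ satisfies Riesz decomposition iff $\DIMEN(\mathbb H)\leq 1$, you supply a correct proof of the nontrivial half via Corollary \ref{rieszcenter} (a nontrivial projection is sharp but not central), and you also spell out the easy direction.
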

\begin{proof}
The map $\phi:\mathcal E(\mathbb H)\mapsto [0,\frac{1}{2}I]$ given by $\phi(A)=\frac{1}{2}A$ is
obviously an isomorphism and $\frac{1}{2}I\leq(\frac{1}{2}I)'$. Therefore,
by Proposition \ref{nofunctions}, every homogeneous  $\mathcal E(\mathbb H)$  
satisfies Riesz decomposition property.
However, it is well known that $\mathcal E(\mathbb H)$ satisfies Riesz decomposition property iff $\DIMEN(\mathbb H)\leq 1$.
\end{proof}

The following example shows that a sub-effect algebra of a homogeneous
effect algebra need not to be homogeneous.

\begin{example}\label{lastone}
Let $E=[0,1]\times[0,1]$, where $[0,1]\subseteq\mathbb R$ denotes the
unit interval of the real line. Equip $E$ with a partial operation 
$\oplus$ with domain given by $(a_1,a_2)\perp (b_1,b_2)$
iff $a_1+b_1\leq 1$ and $a_2+b_2\leq 1$; then define
$(a_1,a_2)\oplus (b_1,b_2)=(a_1+b_1,a_2+b_2)$.
Then $(E,\oplus_E,(0,0),(1,1))$ is a homogeneous effect algebra
(in fact, it is even an MV-algebra). Let 
$$
F=\{(x_1,x_2)\in E:x_1+x_2\in\mathbb Q\}
$$
Since $(1,1)\in F$ and $F$ is closed with respect to $\ominus$,
$F$ is a sub-effect algebra of $E$.

It is easy to see that the map 
$\phi:F\mapsto[(0,0),(\frac{1}{2},\frac{1}{2})]_F$,
given by $\phi(x_1,x_2)=(\frac{1}{2}x_1,\frac{1}{2}x_2)$ is an isomorphism.
Note that $F$ is not a compatible effect algebra: for example, 
$\{(1,0),(\frac{1}{\pi},1-\frac{1}{\pi})\}$ is not compatible in $F$.
Consequently, $F$ does not satisfy Riesz decomposition property and hence, 
by Proposition \ref{nofunctions}, $F$ is not homogeneous.
\end{example}

\begin{example}
Let $\mu$ be the Lebesgue measure on $[0,1]$.
Let $E\subseteq[0,1]^{[0,1]}$ be such that, for all $f\in E$,
\begin{enumerate}
\item[(a)]$f$ is measurable with respect to $\mu$
\item[(b)]$\mu(\mathrm{supp}(f))\in\mathbb Q$
\item[(c)]$\mu(\{x\in [0,1]: f(x)\not\in\{0,1\}\})=0$,
\end{enumerate}
where $\mathrm{supp}(f)$ denotes the support of $f$.
It is easy to check that $E$ is a sub-effect algebra of $[0,1]^{[0,1]}$.
Obviously, $E$ is not an orthoalgebra.
We will show that $E$ is a homogeneous, non-lattice ordered effect algebra and
that $E$ does not satisfy Riesz decomposition property.
Note that, for all $u\in E$, $u\perp u$ iff
$Ran(u)\subseteq[0,\frac 12]$ and $\mu(\mathrm{supp}(u))=0$.
Thus, for all $u\in E$ and $u_0\in [0,1]^{[0,1]}$ such that $u_0\leq u$ and 
$u\perp u$, we have $u_0\in E$.
 
Let $u,v_1,v_2\in E$ be such that $u\leq v_1\oplus v_2$, $u\leq(v_1\oplus
v_2)'$. Since $[0,1]^{[0,1]}$ is an MV-algebra, 
there are $u_1,u_2\in [0,1]^X$ such that $u_1\leq v_1$, 
$u_2\leq v_2$ and $u=u_1\oplus u_2$. By above paragraph,
$u\perp u$ and $u_1,u_2\leq u\in E$ imply that $u_1,u_2\in E$. Therefore,
$E$ is homogeneous.
Let $f,g$ be the characteristic functions of intervals $[0,\frac 23]$,
$[\frac {1}{\pi},\frac {1}{\pi}+\frac{1}{2}]$, respectively.
Then $f\land g$ does not exist in $E_S$. Therefore, $E_S$ is not lattice ordered
and hence, by Theorem 3.3 of \cite{JenRie:OSEiLOEA}, $E$ is not lattice ordered.
Moreover, $E$ does not satisfy Riesz decomposition
property. Indeed, assume the contrary.
Then, by Proposition \ref{rieszcenter}, $E_S=C(E)$. 
In particular, $E_S$ is then a Boolean algebra. However, 
this is a contradiction, since $E_S$ is not lattice ordered.
\end{example}

\providecommand{\bysame}{\leavevmode\hbox to3em{\hrulefill}\thinspace}

\end{document}